\newtheorem{theorem}{Theorem}
\newtheorem{definition}[theorem]{Definition}
\newtheorem{proposition}[theorem]{Proposition}
\newtheorem{lemma}[theorem]{Lemma}
\newtheorem{assumption}[theorem]{Assumption}
\theoremstyle{remark}
\newtheorem{remark}[theorem]{Remark}
\newcommand*{\rom}[1]{\expandafter\@slowromancap\romannumeral #1@}
\newcommand{\R}{\mathbb{R}}
\newcommand{\bchi}{\boldsymbol{\chi}}
\definecolor{light-gray1}{gray}{0.90}
\definecolor{light-gray2}{gray}{0.80}
\definecolor{light-gray3}{gray}{0.60}
\numberwithin{equation}{section}
\numberwithin{theorem}{section}
\numberwithin{table}{section}
\numberwithin{figure}{section}
\title{On    steady states for the Vlasov-Schr\"odinger-Poisson system}
\date{\today}
\author[Y. Hong]{Younghun Hong}
\address{Department of Mathematics, Chung-Ang University, Seoul 06974, Korea}
\email{yhhong@cau.ac.kr}
\author[S. Jin]{Sangdon Jin}
\address{Department of Mathematics, Chung-Ang University, Seoul 06974, Korea}
\email{sdjin@cau.ac.kr}
\begin{document}
\maketitle
 
\begin{abstract}
The Vlasov-Schr\"odinger-Poisson system is a kinetic-quantum hybrid model describing quasi-lower dimensional electron gases. For this system, we construct a large class of 2D kinetic/1D quantum steady states in a bounded domain as generalized free energy minimizers, and we show their finite subband structure, monotonicity, uniqueness and \textit{conditional} dynamical stability.  Our proof is based on the concentration-compactness principle, but some additional difficulties arise due to lack of compactness originated from the hybrid nature (see Remark \ref{difficulties}). To overcome the difficulties, we introduce a 3-step refinement of a minimizing sequence by rearrangement and partial minimization problems, and the coercivity lemma for the free energy (Lemma \ref{lemma: energy coercivity}) is crucially employed.
\end{abstract}

\section{Introduction}

\subsection{Model description}
The dynamics of a large number of electrons is described by various approximation models depending on the physical scale under consideration. At the microscopic level (atoms, molecules, nanostructures), the Hartree-Fock system provides fairly accurate predictions \cite{EESY04, BPS, BJPSS, PRSS}. On the other hand, at the larger mesoscopic level (gases), microscopic models are reduced to kinetic ones such as Vlasov, Boltzmann, Fokker-Planck type equations, and they are employed for computational efficiency. In particular, the Vlasov-Poisson system is rigorously derived as a semi-classical limit from the Hartree-Fock equation \cite{AKN, APPP, LionsPaul, MarkMau, BPSS}.

In some physical settings, however, gaseous electrons are partially strongly confined in a certain direction \cite{Davies, FG}. In a nanowire, electrons are confined in the diameter direction of order tens of nanometers. Two-dimensional electron gases (2DEGs) are formed at the heterojunction between two semiconductors. In such nanostructures, the kinetic transport and the quantum effects must be taken in account at the same time, because quantized energy levels appear in the confined direction while electrons are transported in the unconfined direction. 

In this article, we are concerned with electron gases in a very thin bounded domain, i.e., 2DEGs, whose thickness is comparable to the electron de Broglie length. 2DEGs are constructed in a modulation-doped field-effect transistor (MODFET), a high-electron-mobility-transistor (HEMT) or a graphene where electrons have extremely high mobility \cite[Chapter 9]{Davies}. In this situation, quantum observables in the Hartree-Fock formalism are reduced to a partly quantum and partly kinetic states via the partial semi-classical limit (see \eqref{partially quantum-mechanical state} below). As a consequence, a quantum-kinetic hybrid model, namely a subband model, is derived (see \cite{BM_semiclassical}). 

To be precise, in a dimensionless form, we take a 3D domain\footnote{The thin domain $\Omega_\epsilon=\omega\times(0,\epsilon)$ is scaled to $\Omega=\omega\times(0,1)$. Accordingly, the Schr\"odinger operator $-\frac{\epsilon^2}{2}\Delta_x$ is transformed to $-\frac{\epsilon^2}{2}\Delta_y-\frac{1}{2}\partial_z^2$. Then, $\frac{|v|^2}{2}-\frac{1}{2}\partial_z^2$ is derived by the formal semi-classical limit.} of the form 
$$\Omega=\omega\times(0,1),$$
where $\omega$ is a smooth bounded domain in $\mathbb{R}^2$. Throughout this article, we use the notation 
$$x=(y,z)\in \omega\times(0,1),$$
and the $z$-variable (resp., $y$-variable) is regarded as the confined (resp., non-confined) directional variable.  We denote a sequence of kinetic distributions on the unconfined phase space by 
$$\mathbf{f}=\{f_j\}_{j=1}^\infty\quad\textup{with }f_j=f_j(y,v):\omega\times\mathbb{R}^2\to[0,\infty),$$
and a sequence of quantum states by 
$$\bchi=\{\chi_j\}_{j=1}^\infty\quad\textup{with }\chi_j=\chi_j(x):\Omega\to\mathbb{C}.$$
We assume that $\bchi$ is \textit{partially orthonormal} (with respect to the confined variable $z$) in the sense that for each $y\in\omega$,
$$\langle \chi_j(y,\cdot),\chi_k(y,\cdot)\rangle_{L^2(0,1)}=\int_0^1\chi_j(y,z)\overline{\chi_k(y,z)}dz=\delta_{jk}\quad\textup{for all }j,k\in\mathbb{N}.$$
Then, partially strongly confined electrons can be described by an operator-valued function 
\begin{equation}\label{partially quantum-mechanical state}
(y,v)\mapsto\sum_{j=1}^\infty f_j(y,v)\ketbra{\chi_j(y,\cdot)}{\chi_j(y,\cdot)},
\end{equation}
where $\ketbra{\chi_j(y,\cdot)}{\chi_j(y,\cdot)}$ is the one-particle projector to $L^2(0,1)$, i.e., 
$$\ketbra{\chi_j(x)}{\chi_j(x)}\varphi(z)\rangle=\chi_j(x)\int_\omega \overline{\chi_j(y,z)}\varphi(z)dz\quad\textup{for all }\varphi\in L^2(0,1).$$
Note that a hybrid state \eqref{partially quantum-mechanical state} is equivalent to a pair $(\mathbf{f},\bchi)$, where $(f_j, \chi_j)$ represents the \textit{$j$-th subband}.

In this context, the mass and the energy for the electronic Hartree-Fock system lead to the partially kinetic mass and the energy functionals\footnote{In the definitions, $\bchi$ is hidden in several places. Indeed, the proper definition of the mass would be $\sum_{j=1}^\infty \iint_{\Omega \times\mathbb{R}^2}f_j(y,v)|\chi_j(x)|^2dxdv$, but integration in the $z$-variable yields the formula \eqref{mass}. Likewise, the unconfined directional kinetic energy $\frac{1}{2}\sum_{j=1}^\infty \iint_{\Omega \times\mathbb{R}^2}|v|^2f_j(y,v)|\chi_j(x)|^2dxdv$ is reduced to $\frac{1}{2}\sum_{j=1}^\infty \iint_{\omega \times\mathbb{R}^2}|v|^2f_j(y,v)dydv$ in the total energy \eqref{energy}.}, defined by  
\begin{equation}\label{mass}
\mathcal{M}(\mathbf{f}):=\sum_{j=1}^\infty \iint_{\omega \times\mathbb{R}^2}f_j(y,v)dydv
\end{equation}
and
\begin{equation}\label{energy}
\begin{aligned}
\mathcal{E} (\mathbf{f}, \bchi):&=\sum_{j=1}^\infty \iint_{\omega \times\mathbb{R}^2}\left(\frac{|v|^2}{2}+\frac{1}{2}\|\partial_z\chi_j(y,\cdot)\|_{L^2(0,1)}^2\right)f_j(y,v)dydv\\
&\quad+\sum_{j=1}^\infty \iint_{\Omega \times\mathbb{R}^2}V_{\textup{ext}}(x)|\chi_j(x)|^2f_j(y,v)dxdv+\frac{1}{2}\int_{\Omega} |\nabla U_{\rho_{(\mathbf{f},\bchi)}}(x)|^2dx,
\end{aligned}
\end{equation}
where 
\begin{equation}\label{total density function}
\rho_{(\mathbf{f},\bchi)}(x):=\sum_{j=1}^\infty \rho_{f_j}(y)|\chi_j(x)|^2=\sum_{j=1}^\infty \left(\int_{\mathbb{R}^2}f_j(y,v)dv\right)|\chi_j(x)|^2
\end{equation}
is the \textit{total density function} and $U_{\rho_{(\mathbf{f},\bchi)}}$ is the interaction potential solving the Poisson equation (see \eqref{Poisson equation} below) with $\rho=\rho_{(\mathbf{f},\bchi)}$. Then, the associated Hamiltonian dynamics is described by the so-called Vlasov-Schr\"odinger-Poisson system \eqref{Vlasov equation}-\eqref{Poisson equation} below; On the time interval $[0,T)$, a time-dependent kinetic distribution sequence $\mathbf{f}(t)=\{f_j(t,y,v)\}_{j=1}^\infty$ obeys the 2D Vlasov equation 
\begin{equation}\label{Vlasov equation}
\left\{\begin{aligned}
\partial_t f_j+v\cdot\nabla_yf_j-\lambda_j(t,y)\cdot\nabla_v f_j&=0&&\textup{on }(0,T)\times\omega\times\mathbb{R}^2,\\
f_j(0,y,v)&=f_{j,0}(y,v)&&\textup{on }\omega\times\mathbb{R}^2,\\
f_j(t,y,v)&=g_j(t,y,v) &&\textup{on }(0,T)\times\Sigma_-,
\end{aligned}\right.
\end{equation}
where the outgoing set is given by 
$$\Sigma_-=\Big\{(y,v)\in\partial\omega\times\mathbb{R}^2: v\cdot n_y<0\Big\}$$
and $n_y$ is the outgoing normal vector at $y\in\partial\omega$. On the other hand, for each $(t,y)\in[0,T)\times\omega$, a partially orthonormal quantum state sequence $\bchi(t,y,\cdot)=\{\chi_j(t,y,\cdot)\}_{j=1}^\infty$ solves the quasistatic 1D Schr\"odinger equation with respect to the $z$-variable,
\begin{equation}\label{Schrodinger equation}
\left\{\begin{aligned}
\left(-\frac{\partial_z^2}{2}+(U+V_\textup{ext})(t,y,\cdot)\right)\chi_j(t,y,\cdot)&=\lambda_j(t,y)\chi_j(t,y,\cdot)&&\textup{on }(0,1),\\
\chi_j(t,y,z)&=0&&\textup{if }z=0,1,
\end{aligned}\right.
\end{equation}
where $\lambda_j(t,y)$ is the $j$-th eigenvalue of the Schr\"odinger operator $-\frac{\partial_z^2}{2}+(U+V_\textup{ext})(t,y,\cdot)$ acting on the form domain $H_0^1(0,1)$. Finally, for each $t\in[0,T)$, the self-consistent electronic potential $U(t,\cdot)$ satisfies the Poisson equation 
\begin{equation}\label{Poisson equation}
\left\{\begin{aligned}
-\Delta U&=\rho&&\textup{in } \Omega,\\
U&=0&&\textup{on } \partial \omega \times (0,1), \\
\partial_zU&=0&&\textup{on } \omega\times \{0,1\}.
\end{aligned}\right.
\end{equation}
with $\rho=\rho_{(\mathbf{f}(t),\bchi(t))}$.

Including the above quantum-kinetic model, various subband models have been studied extensively by Ben Abdallah and M\'ehats, and their collaborators \cite{Ben, BCCV, BDG, BM, Mehats, BMN, BMQ, BMV, BMV2, Pinaud, Vau}. In particular, in the important work \cite{BM}, existence of weak solutions to the system \eqref{Vlasov equation}-\eqref{Poisson equation} is established provided that $(i)$ initial data for  higher subbands $\{f_{j,0}(y,v)\}_{j=2}^\infty$ are sufficiently small, and $(ii)$ the boundary data $\{g_j(t,y,v)\}_{j=1}^\infty$ restricted to the outgoing set is bounded in certain norms. Nevertheless, many important questions such as existence of general solutions and their uniqueness still remain open.

In this article, for a conserved system, we however impose the specular-reflection boundary condition to the Vlasov equation \eqref{Vlasov equation}, i.e.,
\begin{equation}\label{specular-reflection boundary condition}
f_j(t,y,v)=f_j\big(t,y,v-2(v\cdot n_y)n_y\big)\quad\textup{for all }(t,y,v)\in(0,T)\times\Sigma_-.
\end{equation}
Indeed, under this boundary condition, sufficiently regular solutions (if they exist) preserve the mass and the energy. Moreover, a generalized entropy (or a Casimir functional) of the form 
$$\mathcal{C}_\beta(\mathbf{f})=\sum_{j=1}^\infty\iint_{\omega \times\mathbb{R}^2}\beta\big(f_j(y,v)\big)dydv,$$
with $\beta:[0,1]\to[0,\infty)$, and the corresponding  generalized \textit{free energy} (or the energy-Casimir) functional 
\begin{equation}\label{free energy}
\mathcal{F}(\mathbf{f},\bchi)=\mathcal{F}_{T,\beta}(\mathbf{f},\bchi):=\mathcal{E}(\mathbf{f},\bchi)+T\mathcal{C}_{\beta}(\mathbf{f}),
\end{equation}
are formally conserved, because each $f_j(t)$ is volume-preserving. 

\subsection{Statement of the main result}
The goal of this paper is to construct a large class of  stationary states to the Vlasov-Poisson-Schr\"odinger system \eqref{Vlasov equation}-\eqref{Poisson equation} by minimizing generalized free energies under a mass constraint. For a proper formation of the variational problem, we introduce the notion of an admissible pair.

\begin{definition}
\begin{enumerate}[$(i)$]
\item (Kinetic admissible class $\mathcal{A}_{\textup{c.m.}}$) The kinetic admissible class $\mathcal{A}_{\textup{c.m.}}$ is the collection of kinetic distribution sequences $\mathbf{f}\in \ell^1(\mathbb{N};L^1(\omega\times\mathbb{R}^2))$ such that for all $j\in\mathbb{N}$ and all $(y,v)\in\omega\times\mathbb{R}^2$, 
$$0\le f_j(y,v) \leq 1\quad\textup{(Pauli exclusion principle)}.$$
\item (Quantum admissible class $\mathcal{A}_{\textup{q.m.}}$)
The quantum admissible class $\mathcal{A}_{\textup{q.m.}}$ is defined as the collection of quantum state sequences $\bchi$ such that for all $y\in\omega$, $\chi_j(y,\cdot)\in H_0^1(0,1)$ and 
$$\langle\chi_j(y,\cdot), \chi_k(y,\cdot)\rangle_{L^2(0,1)} =\delta_{jk} \textup{ for all }j,k\in\mathbb{N}\quad\textup{(partial orthonormality)}$$
\item (Admissible pair) A pair $(\mathbf{f},\bchi)\in\mathcal{A}_{\textup{c.m.}}\times\mathcal{A}_{\textup{q.m.}}$ is called admissible.
\end{enumerate} 
\end{definition}

\begin{assumption}[Assumptions on  the external potential and Casimir functionals]\label{assump on beta} \ 
\begin{enumerate}[$(i)$]
\item $V_{\textup{ext}}\in C(\overline{\Omega})\cap C^1(\Omega)$ and $V_{\textup{ext}}\ge 0$.
\item $\beta:[0,1]\to[0,\infty)$ is strictly convex and differentiable, and $\beta(0)=0$.
\end{enumerate}
\end{assumption} 

Now, we fix $M>0$ and $T\ge 0$, and we consider the free energy minimization problem under a mass-constraint, 
$$\boxed{\quad\mathcal{F}_{\textup{min}}(M)=\mathcal{F}_{\textup{min}}(M; T,\beta):=\inf\Big\{\mathcal{F}_{T,\beta}(\mathbf{f},\bchi): (\mathbf{f},\bchi)\textup{ is admissible and }\mathcal{M}(\mathbf{f}, \bchi)=M\Big\}.\quad}$$

\begin{remark}\label{remark: zero temperature remark}
$(i)$ In the literature, the generalized free energy minimization is considered to investigate thermal effects on quantum mixed states \cite{ADS, DFL}, but it is also used to construct a large class of stable stationary states \cite{GuoRein, Rein}.\\
$(ii)$ For the variational problem $\mathcal{F}_{\textup{min}}(M; T,\beta)$, the zero temperature case $T=0$ is included and it will be treated at the same time unless there is a confusion. Note that $\mathcal{F}_{\textup{min}}(M; 0,\beta)$ is simply the energy minimization problem.
\end{remark}

For the statement, we introduce 
\begin{equation}\label{beta tilde}
\tilde{\beta}(s):=\left\{\begin{aligned}
\mathbbm{1}_{\{s\ge 0\}}(s) &\textup{ if } T=0,\\
\mathbbm{1}_{\{s\ge 0\}}(s)  \inf\{(\beta')^{-1}(\tfrac{s}{T}),1\} &\textup{ if } T>0.
\end{aligned}\right.\end{equation}
Our first main result constructs free energy minimizers and shows their basic properties. 

\begin{theorem}[Minimization of the free energy]\label{existence of a mini} For $M>0$ and $T\ge 0$, the following hold.
\begin{enumerate}[$(1)$]
\item  (Existence) The variational problem $\mathcal{F}_{ \textup{min}}(M; T,\beta)$ has a minimizer $(\mathbf{f}^*, \bchi^*)\in\mathcal{A}_{\textup{c.m.}} \times\mathcal{A}_{\textup{q.m.}}$.
\item (Uniqueness) The minimizer $(\mathbf{f}^*, \bchi^*)$ is unique in the sense that if $(\tilde{\mathbf{f}}^*, \tilde{\bchi}^*)\in\mathcal{A}_{\textup{c.m.}} \times\mathcal{A}_{\textup{q.m.}}$ is another minimizer for $\mathcal{F}_{ \textup{min}}(M)$, then $U_{\rho_{(\mathbf{f}^*, \bchi^*)}}=U_{\rho_{(\tilde{\mathbf{f}}^*, \tilde{\bchi}^*)}}$.
\item (Self-consistent equation) The pair $(\mathbf{f}^*, \bchi^*)$ solves the self-consistent equation 
\begin{equation}\label{self consi}
f_j^*(y,v)=\tilde{\beta}\left(\mu -\frac{|v|^2}{2}-\lambda_j^*(y)\right)
\end{equation}
for some $\mu>0$ and the Schr\"odinger-Poisson system
\begin{equation}\label{Schro Poisson}
\left(-\frac{\partial_z^2}{2}+ (U_{\rho_{(\mathbf{f}^*,\bchi^*)}}+V_{\textup{ext}})(y,\cdot)\right)\chi_j^*(y,\cdot)=\lambda_j^*(y) \chi_j^*(y,\cdot),
\end{equation}
where $\lambda_j^*(y)$ is the $j$-th eigenvalue for the Schr\"odinger operator $-\frac{\partial_z^2}{2}+ (U_{\rho_{(\mathbf{f}^*,\bchi^*)}}+V_{\textup{ext}})(y,\cdot)$ with form domain $H_0^1(0,1)$. 
\item (Monotone finite subband structure) For a.e. $(y,v)\in\omega\times\mathbb{R}^2$, $f_j^*(y,v)$ is strictly decreasing in $j$ and $f_j^*(y,v)\equiv0$ for $j\geq\frac{\sqrt{3\mu}}{\pi}$.
\end{enumerate}
\end{theorem}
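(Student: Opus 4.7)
I would pick a minimizing sequence $(\mathbf{f}^n,\bchi^n)$ with $\mathcal{F}_{T,\beta}(\mathbf{f}^n,\bchi^n)\to\mathcal{F}_{\textup{min}}(M)$. Because $\omega$ is bounded and the confined direction is the compact interval $(0,1)$, the only genuine compactness worries are (a) escape or concentration in $v\in\R^2$ and (b) leakage of $\ell^1$ mass into high subband indices $j\to\infty$, compounded by the nonlinear partial-orthonormality constraint on $\bchi^n$ (the obstacle flagged in Remark \ref{difficulties}). I expect the main difficulty to be this last point: weak limits of partially orthonormal families need not remain partially orthonormal, so merely extracting a weak subsequence of $\bchi^n$ is not enough.

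\textbf{Three-step refinement and limit passage.} Following the strategy announced in the abstract, I would regularize the minimizing sequence in three energy-non-increasing steps. First, replace each $f_j^n$ by its Schwarz symmetric decreasing rearrangement in $v$; this preserves the Casimir and the fiber densities $\rho_{f_j^n}$ while lowering the $v$-kinetic term. Second, for each $(n,y)$ replace $\bchi^n(y,\cdot)$ by the orthonormal eigenbasis of $-\tfrac12\pa_z^2+(U_{\rho_{(\mathbf{f}^n,\bchi^n)}}+V_{\textup{ext}})(y,\cdot)$ on $H_0^1(0,1)$, which is the partial minimizer in $\bchi$ when the fiber densities are frozen. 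Third, with the resulting eigenvalues $\lambda_j^n(y)$ fixed, partially minimize in $\mathbf{f}$ under $\mathcal{M}(\mathbf{f})=M$ and $0\le f_j\le 1$, which via a Lagrange multiplier $\mu^n$ produces the explicit Fermi-Dirac ansatz $f_j^n(y,v)=\tilde\beta(\mu^n-\tfrac{|v|^2}{2}-\lambda_j^n(y))$. Combined with the free-energy coercivity (Lemma \ref{lemma: energy coercivity}), these explicit profiles yield uniform $\ell^1(\mathbb{N})$ bounds with summable tails in $j$, uniform $H^1(\Omega)$ control of $U^n$, and Weyl-type lower bounds on $\lambda_j^n(y)$. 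Rellich compactness then upgrades $U^n\rightharpoonup U^*$ to strong $L^q$ convergence; continuity of the one-dimensional Schr\"odinger eigenproblem in the potential gives pointwise convergence of the $\lambda_j^n(y)$ and an admissible limit $\bchi^*$ with partial orthonormality preserved (since each $\bchi^n$ is already an eigenbasis); the Fermi-Dirac formula passes to the limit to produce $\mathbf{f}^*$. Lower semicontinuity of the remaining terms closes part (1).

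\textbf{Euler-Lagrange equations and uniqueness.} For part (3), admissible variations of $f_j^*$ under the mass constraint and Pauli bound produce $\tfrac{|v|^2}{2}+\lambda_j^*(y)+T\beta'(f_j^*)=\mu$ on the set $\{0<f_j^*<1\}$, with the appropriate inequalities on saturation sets; inverting $\beta'$ and cutting off as in \eqref{beta tilde} is exactly \eqref{self consi}. Admissible $L^2$-orthogonal variations of $\chi_j^*(y,\cdot)$ at fixed $\mathbf{f}^*$, under the partial orthonormality constraint, give \eqref{Schro Poisson} with $\lambda_j^*(y)$ as the Lagrange multiplier (note that at fixed fiber density $\rho_{f_j^*}$, the interaction term does not vary to first order when only $\chi_j^*$ changes). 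For part (2), the interaction energy $\rho\mapsto\tfrac12\int|\nabla U_\rho|^2$ is strictly convex in $\rho$ and the Casimir $\mathbf{f}\mapsto\mathcal{C}_\beta(\mathbf{f})$ is strictly convex in $\mathbf{f}$; the standard convex-combination argument applied to two minimizers then forces $U_{\rho_{(\mathbf{f}^*,\bchi^*)}}=U_{\rho_{(\tilde{\mathbf{f}}^*,\tilde{\bchi}^*)}}$.

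\textbf{Monotone finite subband structure.} For (4), $\tilde\beta$ is nonincreasing by construction and the Dirichlet eigenvalues $\lambda_j^*(y)$ of a one-dimensional Schr\"odinger operator are strictly increasing in $j$, so \eqref{self consi} makes $f_j^*(y,v)$ strictly decreasing in $j$ wherever it is positive. Applying the maximum principle to the mixed Poisson problem \eqref{Poisson equation} with $\rho_{(\mathbf{f}^*,\bchi^*)}\ge 0$ gives $U_{\rho^*}\ge 0$; combined with $V_{\textup{ext}}\ge 0$, standard operator comparison with the free Dirichlet Laplacian gives $\lambda_j^*(y)\ge j^2\pi^2/2$. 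Since $f_j^*\not\equiv 0$ requires $\mu>\lambda_j^*(y)$ on a set of positive measure, the announced threshold $j\ge \tfrac{\sqrt{3\mu}}{\pi}\Rightarrow f_j^*\equiv 0$ follows (with room to spare over the sharp cutoff $\sqrt{2\mu}/\pi$).
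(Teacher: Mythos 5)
Your overall architecture (iterated refinement of a minimizing sequence, partial minimization in $\mathbf{f}$ giving the Fermi--Dirac ansatz, partial minimization in $\bchi$ giving an eigenbasis, coercivity to close) matches the paper's, but two steps have genuine gaps. First, your opening refinement --- Schwarz symmetrization in $v$ --- does nothing about the loss of compactness in the band index $j$, which is the central obstruction (Remark \ref{difficulties}$(i)$). The paper's first refinement is instead a rearrangement \emph{in $j$}: permuting each $\bchi^{(n)}(y,\cdot)$ so that $\|\partial_z\chi_j^{(n)}(y,\cdot)\|_{L^2(0,1)}$ is non-decreasing costs nothing and, via the min--max principle, forces $\|\partial_z\chi_j^{(n)}(y,\cdot)\|_{L^2(0,1)}\gtrsim j$, hence the uniform weighted bound $\sum_j j^2\|f_j^{(n)}\|_{L^1}\lesssim\mathcal{F}_{\textup{min}}(M)$ (Proposition \ref{prop: refinement 1}). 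Without that bound your partial minimization in $\mathbf{f}$ has no tightness in $j$ and its minimizer need not exist. Your second step is also out of order: replacing $\bchi^{(n)}$ by an eigenbasis is only known to decrease the free energy when $\mathbf{f}^{(n)}$ is non-increasing in $j$ with finitely many non-trivial bands --- both the solvability of the Schr\"odinger--Poisson system \eqref{Schrodinger-Poisson} from \cite{BM} and the Abel-summation/min--max argument in Lemma \ref{lemma: energy coercivity} require exactly this monotonicity --- and that structure is only produced \emph{after} the partial minimization in $\mathbf{f}$ together with a further rearrangement making $h_j^{(n)}$ non-decreasing (Proposition \ref{prop: refinement 2}). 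Note also that the correct replacement is the eigenbasis of the \emph{self-consistent} potential $U^*$ (a fixed point), not of the potential generated by the old pair, which would not be energy-non-increasing on its own.

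Second, your uniqueness argument ``by strict convexity and convex combination of two minimizers'' does not run as stated: the admissible class is not convex (an average of two partially orthonormal families is not partially orthonormal), and $\rho_{(\mathbf{f},\bchi)}$ is not an affine function of the pair, so there is no admissible competitor realizing the averaged density. The paper instead derives uniqueness from the coercivity inequality of Lemma \ref{lemma: energy coercivity}, taken at the special minimizer (whose $\bchi^*$ is an eigenbasis and whose $\mathbf{f}^*$ has the Fermi--Dirac form) as base point, combined with the non-negativity of the linear term coming from \eqref{non-negative second term}. Your parts (3)--(4) are essentially sound (your bound $\lambda_j^*\ge \pi^2 j^2/2$ even beats the stated threshold), but two points are glossed over: the first-order variation in $\chi_j^*$ alone does not force $\chi_j^*$ to be the $j$-th eigenfunction in the correct order (the ordering comes from the monotone rearrangement of the $h_j$'s), and passing to the limit in the quantum states requires the uniform $C^{0,1/2}(\Omega)$ bounds and Arzel\`a--Ascoli, since fiberwise weak $H^1(0,1)$ compactness produces $y$-dependent subsequences (Remark \ref{difficulties}$(ii)$).
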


\begin{remark}
In Ben Abdallah-M\'ehats \cite[Theorem 5.1]{BM}, a different class of steady states are constructed. The main contribution of Theorem \ref{existence of a mini} is to provide steady states obeying the self-consistent equation \eqref{self consi}. A key finding is that this class of minimizers allows only finitely many non-trivial kinetic distributions which are decreasing in $j$ pointwisely (see Theorem \ref{existence of a mini} (4)). Such properties are non-trivial and distinctive from general states under consideration.
\end{remark}

 \begin{remark}
In Theorem \ref{existence of a mini} (2), uniqueness should be stated in terms of potential functions. Indeed, given a minimizer, more minimizers can be constructed rearranging $(\mathbf{f}^*, \bchi^*)$ in $j$ (see Section \ref{sec: rearrangement of admissible pairs}). Moreover, since the mass and the free energy do not distinguish quantum states outside the support of density functions, replacing a quantum state $\chi_j^*$ outside the support of $\rho_{f_j^*}$, different minimizers are freely generated. However, all such minimizers have the same potential function $U_{\rho_{(\mathbf{f}, \bchi)}}$.
\end{remark}

Next, we establish the dynamical stability of minimizers.

\begin{theorem}[Conditional dynamical stability]\label{stability}
For $M>0$  and $T\ge 0$, a unique minimizer $(\mathbf{f}^*,\bchi^*)$ for the problem $\mathcal{F}_{ \textup{min}}(M; T,\beta)$ constructed in Theorem  \ref{existence of a mini} is a stable solution to the Vlaosv-Schr\"odinger-Poisson system in the following sense: Given $\epsilon>0$, there exists $\delta>0$ such that the following hold. We assume that 
\begin{enumerate}[$(i)$]
\item $(\mathbf{f}_0,\bchi_0)\in\mathcal{A}_{\textup{c.m.}}\times\mathcal{A}_{\textup{q.m.}}$, $|\mathcal{M}(\mathbf{f}_0)-M|\leq \delta$ and $|\mathcal{F}(\mathbf{f}_0,\bchi_0)-\mathcal{F}(\mathbf{f}^*,\bchi^*)|\leq\delta$.
\item $(\mathbf{f}(t),\bchi(t))$ is a unique global weak solution to the Vlasov-Schr\"odinger-Poisson system \eqref{Vlasov equation}-\eqref{Poisson equation} with initial data $(\mathbf{f}_0,\bchi_0)$, and $\mathbf{f}(t)$ satisfies the the specular-reflection boundary condition \eqref{specular-reflection boundary condition}. Moreover, $\mathcal{M}(\mathbf{f}(t),\bchi(t))=\mathcal{M}(\mathbf{f}_0,\bchi_0)$ and $\mathcal{F}(\mathbf{f}(t),\bchi(t))=\mathcal{F}(\mathbf{f}_0,\bchi_0)$ for all $t\in\mathbb{R}$.
\end{enumerate}
Then,
$$\sup_{t\in\mathbb{R}}\|\nabla (U_{\rho_{(\mathbf{f}(t),\bchi(t))}}-U_{\rho_{(\mathbf{f}^*,\bchi^*)}})\|_{L^2(\Omega)}^2\leq\epsilon.$$
\end{theorem}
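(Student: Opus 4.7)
I would use the standard Cazenave--Lions orbital-stability scheme and argue by contradiction. Suppose stability fails: there exist $\epsilon_0>0$, a null sequence $\delta_n\downarrow 0$, admissible data $(\mathbf{f}_{0,n},\bchi_{0,n})\in\mathcal{A}_{\textup{c.m.}}\times\mathcal{A}_{\textup{q.m.}}$ with $|\mathcal{M}(\mathbf{f}_{0,n})-M|\leq\delta_n$ and $|\mathcal{F}(\mathbf{f}_{0,n},\bchi_{0,n})-\mathcal{F}(\mathbf{f}^*,\bchi^*)|\leq\delta_n$, the corresponding global weak solutions $(\mathbf{f}_n(t),\bchi_n(t))$ from hypothesis (ii), and times $t_n\in\mathbb{R}$ such that
\[
\|\nabla(U_{\rho_{(\mathbf{f}_n(t_n),\bchi_n(t_n))}}-U_{\rho_{(\mathbf{f}^*,\bchi^*)}})\|_{L^2(\Omega)}^2 \geq \epsilon_0.
\]
Conservation of mass and free energy along the flow then yields $\mathcal{M}(\mathbf{f}_n(t_n))\to M$ and $\mathcal{F}(\mathbf{f}_n(t_n),\bchi_n(t_n))\to \mathcal{F}_{\textup{min}}(M)$, so the sequence $(\mathbf{f}_n(t_n),\bchi_n(t_n))$ is a quasi-minimizing sequence for $\mathcal{F}_{\textup{min}}(M)$.

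To feed this into the variational framework of Theorem~\ref{existence of a mini}, I would first normalize the mass by setting $\tilde{\mathbf{f}}_n:=\tfrac{M}{\mathcal{M}(\mathbf{f}_n(t_n))}\mathbf{f}_n(t_n)$, keeping $\bchi_n(t_n)$ unchanged. The Pauli bound $0\leq f_{j,n}\leq 1$ is preserved for $n$ large, and the coercivity lemma (Lemma~\ref{lemma: energy coercivity}) applied to the uniformly bounded free energy gives a uniform bound on the kinetic and quantum-kinetic parts of $\mathcal{E}$; hence the rescaling perturbs $\mathcal{F}$ by $o(1)$ and $\rho_n$ by $o(1)$ in $L^1(\Omega)$. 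Combined with the continuity of $M'\mapsto\mathcal{F}_{\textup{min}}(M')$ at $M'=M$, this shows $(\tilde{\mathbf{f}}_n,\bchi_n(t_n))$ is a genuine minimizing sequence for $\mathcal{F}_{\textup{min}}(M)$, while the contradiction bound
\[
\|\nabla(U_{\rho_{(\tilde{\mathbf{f}}_n,\bchi_n(t_n))}}-U_{\rho_{(\mathbf{f}^*,\bchi^*)}})\|_{L^2(\Omega)}^2 \geq \tfrac{\epsilon_0}{2}
\]
persists for $n$ large.

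The compactness part of the proof of Theorem~\ref{existence of a mini} (concentration-compactness combined with the three-step refinement by rearrangement and partial minimization, driven by Lemma~\ref{lemma: energy coercivity}) applies to any minimizing sequence and produces, along a subsequence, strong convergence of the total potentials in $\dot H^1(\Omega)$ to the potential of some minimizer $(\mathbf{f}^{**},\bchi^{**})$. The uniqueness statement Theorem~\ref{existence of a mini}(2) then forces $U_{\rho_{(\mathbf{f}^{**},\bchi^{**})}}=U_{\rho_{(\mathbf{f}^*,\bchi^*)}}$, contradicting the lower bound just recorded and hence establishing stability.

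\textbf{Main obstacle.} The delicate point is transferring the compactness half of Theorem~\ref{existence of a mini} from the hand-built minimizing sequence of the existence proof to the dynamical quasi-minimizing sequence produced by the flow: the hybrid (kinetic-in-$(y,v)$, quantum-in-$z$) structure means that boundedness of the spatial domain $\omega$ alone does not yield compactness of the potentials — this is precisely why the three-step refinement was introduced — and one must check that the rearrangement and partial-minimization steps, together with Lemma~\ref{lemma: energy coercivity}, remain applicable to an arbitrary admissible minimizing sequence rather than just a constructed one. The fact that Theorem~\ref{existence of a mini}(2) delivers uniqueness at the level of the potential $U_{\rho}$ (not of $(\mathbf{f},\bchi)$ itself) is what makes the $\dot H^1$-stability formulation natural and closes the argument.
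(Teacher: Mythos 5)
Your proposal takes the Cazenave--Lions contradiction-plus-compactness route, whereas the paper's proof is a direct, quantitative energy--Casimir argument, and the obstacle you flag at the end is in fact a genuine gap that your scheme does not close. The compactness established in the existence proof is \emph{not} a compactness statement about arbitrary minimizing sequences: the three-step refinement does not extract a convergent subsequence of a given sequence, it \emph{replaces} its elements (in Step 2, $\mathbf{f}^{(n)}$ is replaced by the minimizer of the partial problem $\mathcal{F}_{\bchi^{(n)};\textup{min}}(M)$; in Step 3, $\bchi^{(n)}$ is replaced by the eigenfunctions of the Schr\"odinger--Poisson solution). The refined sequence therefore has different total densities and different potentials than the dynamical sequence $(\mathbf{f}_n(t_n),\bchi_n(t_n))$ you start from, so convergence of the refined potentials to $U_{\rho_{(\mathbf{f}^*,\bchi^*)}}$ says nothing about $U_{\rho_{(\mathbf{f}_n(t_n),\bchi_n(t_n))}}$ without a separate estimate comparing a near-minimizer's potential to the minimizer's potential --- and that estimate is precisely Lemma \ref{lemma: energy coercivity}, which, once invoked, makes the entire contradiction scheme redundant. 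Two further soft spots: the mass normalization $\tilde{\mathbf{f}}_n=\tfrac{M}{\mathcal{M}(\mathbf{f}_n(t_n))}\mathbf{f}_n(t_n)$ can push $f_{j,n}$ above the Pauli bound $1$ when $\mathcal{M}(\mathbf{f}_n(t_n))<M$, so $\tilde{\mathbf{f}}_n$ need not be admissible; and the continuity of $M'\mapsto\mathcal{F}_{\textup{min}}(M')$ is asserted but not proved (the paper never needs it).

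The paper's actual argument is one display long. Since the Lagrange multiplier $\mu$ from the self-consistent equation \eqref{self consi} is available, the conserved quantity
$$\mathcal{F}(\mathbf{f}(t),\bchi(t))-\mathcal{F}(\mathbf{f}^*,\bchi^*)-\mu\big(\mathcal{M}(\mathbf{f}(t))-M\big)$$
is bounded by $(1+\mu)\delta$ from hypothesis $(i)$ and the conservation laws in $(ii)$. After rearranging $\mathbf{f}(t)$ into $\mathcal{A}_{\textup{c.m.}}^{\downarrow}$ (which changes nothing), Lemma \ref{lemma: energy coercivity} applied at the minimizer $(\mathbf{f}^*,\bchi^*)$ bounds this quantity from below by
$$\tfrac{1}{2}\|\nabla (U_{\rho_{(\mathbf{f}(t),\bchi(t))}}-U_{\rho_{(\mathbf{f}^*,\bchi^*)}})\|_{L^2(\Omega)}^2+\sum_{j=1}^\infty\iint_{\omega\times\mathbb{R}^2}\Big(\tfrac{|v|^2}{2}+\lambda_j^*+T\beta'(f_j^*)-\mu\Big)(f_j^{\sigma_\downarrow}(t)-f_j^*)\,dydv,$$
and the sign computation \eqref{non-negative second term}, driven by the explicit form \eqref{self consi} of $f_j^*$, shows the second term is nonnegative. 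Choosing $\delta=\tfrac{\epsilon}{2(1+\mu)}$ finishes the proof, with no contradiction argument and no compactness. If you want to salvage your write-up, the fix is to discard the contradiction framework and apply Lemma \ref{lemma: energy coercivity} directly to $(\mathbf{f}^{\sigma_\downarrow}(t),\bchi^{\sigma_\downarrow}(t))$ versus $(\mathbf{f}^*,\bchi^*)$, handling the mass mismatch by subtracting $\mu$ times the mass difference rather than by rescaling.
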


\begin{remark}
The stability in Theorem \ref{stability} is \textit{conditional}, because well-posedness of the initial-value problem \eqref{Vlasov equation}-\eqref{Poisson equation} is currently unknown. For this reason, the assumption $(ii)$ is imposed. Even while it is conditional, to the best of the authors' knowledge, Theorem \ref{stability} is the first stability result for subband models.
\end{remark}

\subsection{Idea of the proof}

Our main results consist of construction of free energy minimizers for the subband model, and their uniqueness and stability properties. Here, we describe the main difficulties coming from the hybrid nature, and present the outline of the proof. 

First, we note that the variational $\mathcal{F}_{\textup{min}}(M)=\mathcal{F}_{\textup{min}}(M; T,\beta)$ has a minimizing sequence $\{(\mathbf{f}^{(n)},\bchi^{(n)})\}_{n=1}^\infty\subset \mathcal{A}_{\textup{c.m.}}\times\mathcal{A}_{\textup{q.m.}}$ each of which has finite free energy. Indeed, $\mathcal{F}_{\textup{min}}(M)\geq0$, and an upper bound for $\mathcal{F}_{\textup{min}}(M)$ is obtained by a smooth single-band pair $(\tilde{\mathbf{f}}, \tilde{\bchi})$ such that $\|\tilde{f}_1\|_{L^1(\omega\times \R^2)}=M$ but $\tilde{f}_j\equiv0$ for all $j\geq 2$. Thus, one may attempt to construct a minimizer as a (weak subsequential) limit of a minimizing sequence. Then, one would however encounter several non-trivial obstacles described below. 

\begin{remark}[Difficulties in the variational problem $\mathcal{F}_{\textup{min}}(M)$]\label{difficulties}\
\begin{enumerate}[$(i)$]
\item (Lack of compactness with respect to the $j$-index) All $j$-summed quantities in the mass and the free energy are invariant under translation in $j$. Hence, the dichotomy and the vanishing (in $j$) scenarios cannot be eliminated in general. This issue may be avoided restricting the number of non-trivial subbands\footnote{For $J\in\mathbb{N}$, the admissible class is further restricted to kinetic distributions with $f_j\equiv 0$ for all $j\geq J+1$}, but its minimizer (even if it exists) might be unstable under general perturbations with more non-zero subbands.

\item (Compactness of the quantum state part) More significant challenges arise in the quantum states $\{\bchi^{(n)}\}_{n=1}^\infty$. At first glance, in the free energy (see \eqref{free energy}), the term 
$$\sum_{j=1}^\infty \iint_{\omega \times\mathbb{R}^2} \|\partial_z\chi_j^{(n)}(y,\cdot)\|_{L^2(0,1)}^2f_j^{(n)}(y,v)dydv$$
seems the only useful term for compactness of $\{\bchi^{(n)})\}_{n=1}^\infty$. However, it cannot be directly employed due to the following two reasons.
\begin{enumerate}[$(a)$]
\item A uniform bound on $\|\partial_z\chi_j^{(n)}(y,\cdot)\|_{L^2(0,1)}$ is difficult to obtain, because the weight function $f_j^{(n)}(y,v)$ also depends on $n$ and its uniform lower bound is not expected.
\item If $\|\partial_z\chi_j^{(n)}(y,\cdot)\|_{L^2(0,1)}$ is shown to be bounded uniformly in $n$, for each $y\in\omega$, the Rellich-Kondrachov theorem provides convergence of a subsequence $\{\chi_{j_{k;y}}^{(n)}(y,\cdot)\}_{n=1}^\infty$, where the choice of a sub-index $\{j_{k;y}\}_{k=1}^\infty$ depends on $y\in\omega$. Thus, as functions of $y$, a convergent subsequence of $\{\chi_j^{(n)}(y,\cdot)\}_{n=1}^\infty$ cannot be chosen. This is due to the quantum-kinetic hybrid nature of the problem where no good derivative norm bound is available in the unconfined $y$-variable direction. 
\end{enumerate} 
\end{enumerate}
\end{remark}

We solve the above problems by refining minimizing sequences in three steps.  
\begin{enumerate}[(\textup{Step }1)]
\item The mass and the free energy are invariant under the rearrangements in Section \ref{sec: rearrangement of admissible pairs}. Thus, rearranging each $(\mathbf{f}^{(n)},\bchi^{(n)})$ with non-decreasing quantum kinetic energies, we can derive a uniform bound on the weighted summation norm for $\mathbf{f}^{(n)}$ (see Proposition \ref{prop: refinement 1}), and it overcomes the obstacle $(i)$. 
\item We consider the partial free energy minimization ``fixing $\bchi^{(n)}$  from Step 1." By the concentration-compactness principle with the uniform weighted summation norm bound in Step 1, we construct a minimizer $\tilde{\mathbf{f}}^{(n)}$ solving a self-consistent equation. Then, we refine the sequence replacing $\mathbf{f}^{(n)}$ by $\tilde{\mathbf{f}}^{(n)}$. Consequently, we deduce from the self-consistent equation   that the refined minimizing sequence has only finitely many non-trivial subbands and kinetic distributions are non-increasing in $j$ (see Proposition \ref{prop: refinement 2}).
\item We consider another partial free energy minimization ``fixing $\mathbf{f}^{(n)}$ from Step 2." Here, we do not employ the concentration-compactness principle. Instead, we take the unique solution $\tilde{\bchi}^{(n)}$ to the Schr\"odinger-Poisson equation \eqref{Schrodinger equation}-\eqref{Poisson equation} with $\mathbf{f}=\mathbf{f}^{(n)}$, constructed in Ben Abdallah-M\'ehats \cite{BM}, which exists because $\{f_j^{(n)}\}_{j=1}^\infty$ is non-increasing in $j$. Then, using coercivity of the free energy functionals (Lemma \ref{lemma: energy coercivity}), we show that $\tilde{\bchi}^{(n)}$ is a desired minimizer. Moreover, replacing $\bchi^{(n)}$ by $\tilde{\bchi}^{(n)}$, a minimizing sequence is refined so that each $\bchi^{(n)}$ solves the Schr\"odinger equation \eqref{Schrodinger equation} (see Proposition \ref{proposition: reduced problem}).
\end{enumerate}
By Step 1-3, a minimizing sequence $(\mathbf{f}^{(n)},\bchi^{(n)})$ is refined so that it has only finitely many non-trivial subbands, distributions in $\mathbf{f}^{(n)}$ is non-increasing, and $\bchi^{(n)}$ are eigenfunctions of the quasi-static 1D Schr\"odinger operator. Then, it follows that the nonlinear potentials, $U^{(n)}=U_{\rho_{(\mathbf{f}^{(n)},\bchi^{(n)})}}+V_{\textup{ext}}$'s, are uniformly bounded and equicontinuous with respect to $x\in\Omega$ and $n\geq 1$, and so are eigenfunctions $\bchi^{(n)}=\{\chi_{j}^{(n)}(x)\}_{j=1}^\infty$ to $-\frac{\partial_z^2}{2}+U^{(n)}$. Therefore, finally, we use the Arzel\`a-Ascoli theorem to take a sub-sequential limit of the refined minimizing sequence. 
 
For uniqueness and stability, the coercivity lemma for the free energy (Lemma \ref{lemma: energy coercivity}) is also crucially employed together with the structure of the minimizer from the self-consistent equation. 

\subsection{Organization of the paper}
In Section \ref{sec: Basic properties of total densities}, we provide a basic kinetic interpolation inequality for the Vlasov-Schr\"odinger-Poisson system. In the next three sections (Section \ref{sec: 1st refinement}-\ref{sec: 3rd refinement}), we refine a minimizing sequence in three steps as described above. Finally, in Section \ref{exist and uniq stable}, using the refined minimizing sequence, we prove the main results.

\subsection{Acknowledgement}
This work was supported by National Research Foundation of Korea (NRF) grant funded by the Korean government (MSIT) (No. NRF-2020R1A2C4002615). The authors would like to thank Prof. Jin Woo Jang at Postech for kindly explaining various boundary conditions in kinetic theory.

\section{Basic inequalities for total densities}\label{sec: Basic properties of total densities}

To begin with, we present elementary inequalities for total densities (see \eqref{total density function}) which will be applied to the variational problem. The following is a simple extension of the well-known interpolation inequalities in kinetic theory.

\begin{lemma}[Kinetic interpolation inequality]\label{basic estimates}
Let $s \in[1,\infty]$. Then, for an admissible pair $(\mathbf{f},\bchi)\in \mathcal{A}_{\textup{c.m.}}\times\mathcal{A}_{\textup{q.m.}}$, we have
\begin{equation}\label{kinetic interpolation inequality}
\begin{aligned}
\|\rho_{(\mathbf{f}, \bchi)}\|_{L^\frac{5s-3}{3s-1}(\Omega)}&\lesssim \|\mathbf{f}\|_{\ell^1(\mathbb{N}; L^s(\omega\times \R^2))}^\frac{2s}{5s-3}\big\||v|^2\mathbf{f}\big\|_{\ell^1(\mathbb{N}; L^1(\omega\times \R^2))}^\frac{2(s-1)}{5s-3}\\
&\quad\cdot\bigg\{ \sum_{j=1}^\infty\big\|\|\partial_z\chi_j\|_{L^2(0,1)}^2f_j\big\|_{L^1(\omega\times\mathbb{R}^2))}\bigg\}^\frac{s-1}{5s-3}.
\end{aligned}
\end{equation}
\end{lemma}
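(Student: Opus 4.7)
The plan is to reduce the estimate to the classical 2D kinetic interpolation applied subband-by-subband, combined with a one-dimensional Sobolev estimate in the confined direction. The key auxiliary ingredient is the elementary Gagliardo--Nirenberg inequality
$$\|\chi\|_{L^\infty(0,1)}^2 \leq 2\|\chi\|_{L^2(0,1)}\|\partial_z\chi\|_{L^2(0,1)}\quad\text{for every }\chi\in H^1_0(0,1),$$
which, combined with the partial orthonormality $\|\chi_j(y,\cdot)\|_{L^2(0,1)}=1$ built into $\mathcal{A}_{\textup{q.m.}}$, yields the pointwise bound $|\chi_j(y,z)|^2\leq 2\|\partial_z\chi_j(y,\cdot)\|_{L^2(0,1)}$.

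Set $p=\tfrac{5s-3}{3s-1}$ and write $A(y):=\sum_j \rho_{f_j}(y)$ and $C(y):=\sum_j \rho_{f_j}(y)\|\partial_z\chi_j(y,\cdot)\|_{L^2(0,1)}^2$. First I would interpolate on each $z$-slice via $\|u\|_{L^p_z}^p\leq \|u\|_{L^1_z}\|u\|_{L^\infty_z}^{p-1}$. Partial orthonormality collapses the $L^1_z$-norm of $\rho_{(\mathbf{f},\bchi)}(y,\cdot)$ to $A(y)$, while the $L^\infty_z$-norm is bounded by $2\sum_j \rho_{f_j}(y)\|\partial_z\chi_j(y,\cdot)\|_{L^2(0,1)}$ via the Gagliardo--Nirenberg estimate above. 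Integrating in $y$ and applying Cauchy--Schwarz in the subband index ($\sum_k \rho_{f_k}\|\partial_z\chi_k\|_{L^2}\leq A^{1/2}C^{1/2}$) to trade the linear quantum factor for its quadratic counterpart, one arrives at
$$\|\rho_{(\mathbf{f},\bchi)}\|_{L^p(\Omega)}^p\lesssim \int_\omega A(y)^{(p+1)/2}C(y)^{(p-1)/2}\,dy.$$

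Next I would apply Hölder in $y$ with conjugate exponents $\tfrac{3s-1}{2s}$ and $\tfrac{3s-1}{s-1}$, chosen so that $C$ appears with exponent exactly $1$; this makes $\int_\omega C\,dy$ realize the quantum kinetic functional on the right-hand side, while the remaining factor is $\|A\|_{L^{(2s-1)/s}(\omega)}^{2(2s-1)/(3s-1)}$. The latter is handled by pulling the sum over $j$ outside the $L^{(2s-1)/s}$-norm via Minkowski and then invoking the classical 2D kinetic interpolation
$$\|\rho_{f_j}\|_{L^{(2s-1)/s}(\omega)}\lesssim \|f_j\|_{L^s(\omega\times\R^2)}^{s/(2s-1)}\||v|^2 f_j\|_{L^1(\omega\times\R^2)}^{(s-1)/(2s-1)}$$
on each subband (which itself follows from the usual high/low velocity truncation $\int_{\R^2}f\,dv=\int_{|v|\leq R}+\int_{|v|>R}$ optimized in $R$). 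A final Hölder in the subband index $j$ with conjugate pair $\tfrac{2s-1}{s}$ and $\tfrac{2s-1}{s-1}$ turns the product $\|f_j\|_{L^s}^{s/(2s-1)}\||v|^2 f_j\|_{L^1}^{(s-1)/(2s-1)}$ into $\|\mathbf{f}\|_{\ell^1L^s}^{s/(2s-1)}\||v|^2\mathbf{f}\|_{\ell^1L^1}^{(s-1)/(2s-1)}$. Gathering the estimates and raising to the $1/p$ power yields the claim.

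The main point of care is the bookkeeping across the four nested Hölder / interpolation steps (in $z$, in $j$ twice, and in $y$); the consistency check is the scaling identity $\tfrac{2s}{5s-3}+\tfrac{2(s-1)}{5s-3}+\tfrac{s-1}{5s-3}=1$, which is precisely why $p=\tfrac{5s-3}{3s-1}$ appears and reflects that the $z$-derivative term plays the role of the ``missing'' third kinetic component compared with the classical 3D kinetic interpolation. The endpoints $s=1$ (trivial, as $p-1=0$) and $s=\infty$ (Hölder in $j$ becomes Cauchy--Schwarz) are recovered by interpreting the exponents as the obvious limits.
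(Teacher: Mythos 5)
Your argument is correct and rests on the same ingredients as the paper's proof: a 1D Gagliardo--Nirenberg inequality in the confined variable combined with the normalization $\|\chi_j(y,\cdot)\|_{L^2(0,1)}=1$, the classical 2D kinetic interpolation \eqref{inter ineq1}, and H\"older in the subband index, and your exponent bookkeeping checks out at every step. The only (cosmetic) difference is organizational: the paper first splits $\|\rho_{(\mathbf{f},\bchi)}\|_{L^p(\Omega)}$ by the triangle inequality into single-subband pieces and applies a fractional $L^{2(5s-3)/(3s-1)}$-Gagliardo--Nirenberg estimate to each, whereas you keep the full sum, interpolate $L^1_z$--$L^\infty_z$ on the total density, and use Cauchy--Schwarz in $j$ to convert the linear factor $\|\partial_z\chi_j\|_{L^2(0,1)}$ into the quadratic one --- the two routes are equivalent in substance.
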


\begin{proof}
The proof can be reduced to the single subband case $f_j\equiv 0$ for $j\geq 2$, because the inequality \eqref{kinetic interpolation inequality} simply follows from the single subband inequality to the right hand side of $\|\rho_{(\mathbf{f}, \bchi)}\|_{L^\frac{5s-3}{3s-1}(\Omega)}\leq\sum_{j=1}^\infty\|\rho_{f_j}|\chi_j|^2\|_{L^\frac{5s-3}{3s-1}(\Omega)}$ with the H\"older inequality in the sum $\sum_{j=1}^\infty$.

In the single subband case, applying the 1D Gagliardo-Nirenberg inequality 
$$\|\chi\|_{L^{\frac{2(5s-3)}{3s-1}}(0,1)}\lesssim \|\chi\|_{L^{2}(0,1)}^\frac{2(2s-1)}{5s-3}\|\partial_z\chi\|_{L^2(0,1)}^\frac{s-1}{5s-3}$$
with $\|\chi_1\|_{L^{2}(0,1)}=1$ and the H\"older inequality, we obtain 
$$\begin{aligned}
\|\rho_{f_1}|\chi_1|^2\|_{L^\frac{5s-3}{3s-1}(\Omega)}&=\left\|\rho_{f_1}\|\chi_1\|_{L^{\frac{2(5s-3)}{3s-1}}(0,1)}^2\right\|_{L^\frac{5s-3}{3s-1}(\omega)}\\
&\lesssim\big\|\rho_{f_1}^\frac{2(2s-1)}{5s-3} \rho_{f_1}^\frac{s-1}{5s-3} \|\partial_z\chi_1\|_{L^2(0,1)}^\frac{2(s-1)}{5s-3}\big\|_{L^\frac{5s-3}{3s-1}(\omega)}\\ 
&\lesssim\|\rho_{f_1}\|_{L^\frac{2s-1}{s}(\omega)}^\frac{2(2s-1)}{5s-3}\big\|\rho_{f_1} \|\partial_z\chi_1\|_{L^2(0,1)}^2\big\|_{L^1(\omega)}^\frac{s-1}{5s-3}.
\end{aligned}$$
Finally, applying the well-known interpolation inequality
\begin{equation}\label{inter ineq1}
\|\rho_f\|_{L^\frac{2s-1}{s}(\omega)}\lesssim\|f\|_{L^s(\omega\times \R^2)}^\frac{s}{2s-1}\big\||v|^2f\big\|_{L^1(\omega\times \R^2)}^\frac{s-1}{2s-1},
\end{equation}
we complete the proof.
\end{proof}

For a general admissible pair $(\mathbf{f},\bchi)$, the inequality \eqref{kinetic interpolation inequality} is not good enough to control its total density function, because $\|\mathbf{f}\|_{\ell^1(\mathbb{N};L^s(\omega\times \R^2))}$ is not a priori-ly bounded. Nevertheless, we will be concerned only with admissible pairs uniformly bounded in a weighted norm (see Proposition \ref{prop: refinement 1}). For them, we have a good bound. 

\begin{lemma}[Upgraded total density estimate]\label{density estimate}
Let $T\geq 0$. For $(\mathbf{f},\bchi)\in \mathcal{A}_{\textup{c.m.}}\times\mathcal{A}_{\textup{q.m.}}$, we have 
$$\|\rho_{(\mathbf{f}, \bchi)}\|_{L^\frac{5s-3}{3s-1}(\Omega)}\lesssim  \bigg\{\sum_{j=1}^\infty j^2\|f_j\|_{L^1(\omega\times\mathbb{R}^2)}\bigg\}^\frac{2}{5s-3}\mathcal{F}(\mathbf{f}, \bchi)^\frac{3(s-1)}{5s-3}\quad \textup{for any }s\in[1,3).$$
\end{lemma}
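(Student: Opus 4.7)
The plan is to combine the kinetic interpolation inequality of Lemma \ref{basic estimates} with two elementary observations: first, that each non-negative summand in $\mathcal{F}$ can be dropped to extract an upper bound on the individual terms appearing on the right of \eqref{kinetic interpolation inequality}; and second, that the Pauli exclusion bound $0\le f_j\le 1$ together with a weighted H\"older inequality on the $j$-index lets us trade $\|\mathbf{f}\|_{\ell^1(\mathbb{N};L^s)}$ for the weighted norm $\sum_j j^2\|f_j\|_{L^1}$ at the cost of a constant that stays finite exactly when $s<3$.

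\textbf{Step 1 (bounding the energy-type factors).} Because $V_{\textup{ext}}\ge 0$, $\beta\ge 0$, $T\ge 0$, and $\tfrac12\int_\Omega|\nabla U_{\rho_{(\mathbf{f},\bchi)}}|^2\ge 0$, the definition \eqref{energy} of $\mathcal{E}$ and \eqref{free energy} of $\mathcal{F}$ give the pointwise bounds
\begin{equation*}
\big\||v|^2\mathbf{f}\big\|_{\ell^1(\mathbb{N};L^1(\omega\times\R^2))}\le 2\,\mathcal{F}(\mathbf{f},\bchi),\qquad \sum_{j=1}^\infty \big\|\|\partial_z\chi_j\|_{L^2(0,1)}^2 f_j\big\|_{L^1(\omega\times\R^2)}\le 2\,\mathcal{F}(\mathbf{f},\bchi).
\end{equation*}

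\textbf{Step 2 (bounding $\|\mathbf{f}\|_{\ell^1(\mathbb{N};L^s)}$ by the weighted $\ell^1$-norm).} Since $0\le f_j\le 1$, we have $\|f_j\|_{L^s(\omega\times\R^2)}\le \|f_j\|_{L^1(\omega\times\R^2)}^{1/s}$. For $s\in(1,3)$, write $\|f_j\|_{L^1}^{1/s}=j^{-2/s}(j^2\|f_j\|_{L^1})^{1/s}$ and apply H\"older on $\mathbb{N}$ with conjugate exponents $(s/(s-1),s)$:
\begin{equation*}
\|\mathbf{f}\|_{\ell^1(\mathbb{N};L^s)}\le \sum_{j=1}^\infty \|f_j\|_{L^1}^{1/s}\le \bigg(\sum_{j=1}^\infty j^{-\tfrac{2}{s-1}}\bigg)^{\tfrac{s-1}{s}}\bigg(\sum_{j=1}^\infty j^2\|f_j\|_{L^1}\bigg)^{\tfrac{1}{s}}.
\end{equation*}
The first factor is a finite constant precisely because $\tfrac{2}{s-1}>1\iff s<3$; the case $s=1$ is trivial since then $\|\mathbf{f}\|_{\ell^1(\mathbb{N};L^1)}\le \sum_j j^2\|f_j\|_{L^1}$ directly. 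This is the step that forces the restriction $s\in[1,3)$ stated in the lemma.

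\textbf{Step 3 (assembly).} Inserting the bound of Step 2 and the two inequalities of Step 1 into \eqref{kinetic interpolation inequality}, the exponent of $\sum_j j^2\|f_j\|_{L^1}$ becomes $\tfrac{1}{s}\cdot\tfrac{2s}{5s-3}=\tfrac{2}{5s-3}$, while the exponent of $\mathcal{F}$ is $\tfrac{2(s-1)}{5s-3}+\tfrac{s-1}{5s-3}=\tfrac{3(s-1)}{5s-3}$, matching the claim. The main (mild) obstacle is Step~2, i.e.\ choosing the correct weight $j^{-2/s}$ so that the resulting series converges on the entire admissible range $s\in[1,3)$; everything else is direct bookkeeping on top of Lemma \ref{basic estimates}.
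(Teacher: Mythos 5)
Your proposal is correct and follows essentially the same route as the paper: drop the non-negative terms of $\mathcal{F}$ to bound the two energy factors, and use the Pauli bound $0\le f_j\le 1$ with the weight $j^{-2/s}$ and H\"older on the $j$-sum (convergent exactly for $s<3$) to control $\|\mathbf{f}\|_{\ell^1(\mathbb{N};L^s)}$ by the weighted norm. The exponent bookkeeping also matches the paper's.
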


\begin{proof}
  It is enough to show that all factors on the right hand sides of \eqref{kinetic interpolation inequality} are bounded by $\mathcal{F}(\mathbf{f}, \bchi)$ and $\sum_{j=1}^\infty j^2\|f_j\|_{L^1(\omega\times\mathbb{R}^2)}$. Indeed, the second and the last factors are obviously bounded by $\sim  \mathcal{F}(\mathbf{f}, \bchi)$. Hence, we only consider $\|\mathbf{f}\|_{\ell^1(\mathbb{N}; L^s(\omega\times \R^2))}$. Note that since $0\leq f_j\leq 1$, we have $\|\mathbf{f}\|_{\ell^1(\mathbb{N}; L^s(\omega\times \R^2))}=\sum_{j=1}^\infty \|f_j\|_{L^s(\omega\times\mathbb{R}^2)}\leq \sum_{j=1}^\infty \|f_j\|_{L^1(\omega\times\mathbb{R}^2)}^{1/s}$. Hence, by the H\"older inequality in $\sum_{j=1}^\infty$ with $\frac{2}{s}\cdot\frac{s}{s-1}>1$, we obtain
\begin{equation}\label{inter ineq2}
\|\mathbf{f}\|_{\ell^1(\mathbb{N}; L^s(\omega\times \R^2))}\leq \sum_{j=1}^\infty j^{-\frac{2}{s}} \Big(j^2\|f_j\|_{L^1(\omega\times\mathbb{R}^2)}\Big)^{\frac{1}{s}}\lesssim\bigg\{\sum_{j=1}^\infty j^2\|f_j\|_{L^1(\omega\times\mathbb{R}^2)}\bigg\}^{\frac{1}{s}}.
\end{equation}
Thus, combining these, we prove the result.
\end{proof}

\section{First refinement of a minimizing sequence: improved  summability}\label{sec: 1st refinement}

In this section, we rearrange a minimizing sequence for better summability.

\begin{proposition}[Refined minimizing sequence with a better summability]\label{prop: refinement 1}
For $M>0$  and $T\ge 0$, the variational problem $\mathcal{F}_{\textup{min}}(M)$ has a minimizing sequence
$$\{(\mathbf{f}^{(n)},\bchi^{(n)})\}_{n=1}^\infty\subset \mathcal{A}_{\textup{c.m.}}\times\mathcal{A}_{\textup{q.m.}}^\uparrow$$
where $\mathcal{A}_{\textup{q.m.}}^\uparrow\subset \mathcal{A}_{\textup{q.m.}}$ is the subclass of quantum admissible states having non-decreasing kinetic energies, i.e., for each $y\in\omega$, $\|\partial_z\chi_j(y,\cdot)\|_{L^2(0,1)}$ is non-decreasing in $j\geq1$. As a consequence (by Lemma \ref{improved summability} below), we have
\begin{equation}\label{weighted ell 1 bound}
\sup_{n\geq 1}\left\{\sum_{j=1}^\infty j^2\|f_j^{(n)}\|_{ L^1(\omega\times\mathbb{R}^2)}\right\}\lesssim \mathcal{F}_{\textup{min}}(M),
\end{equation}
\end{proposition}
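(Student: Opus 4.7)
} Start from any minimizing sequence $\{(\mathbf{f}^{(n)},\bchi^{(n)})\}\subset\mathcal{A}_{\textup{c.m.}}\times\mathcal{A}_{\textup{q.m.}}$ with $\mathcal{F}(\mathbf{f}^{(n)},\bchi^{(n)})\to \mathcal{F}_{\textup{min}}(M)$; such a sequence exists because, as remarked just before Remark \ref{difficulties}, a smooth single-band trial pair provides a finite upper bound for $\mathcal{F}_{\textup{min}}(M)$. The plan is to apply, for each $n$ and pointwise in $y\in\omega$, the increasing rearrangement of the index $j$ with respect to the quantum kinetic energy $\|\partial_z\chi_j^{(n)}(y,\cdot)\|_{L^2(0,1)}^2$, and simultaneously apply the \emph{same} $y$-dependent permutation to the kinetic distributions $\{f_j^{(n)}(y,v)\}_j$. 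This is exactly the type of rearrangement discussed in Section \ref{sec: rearrangement of admissible pairs}, so the resulting pair lies in $\mathcal{A}_{\textup{c.m.}}\times\mathcal{A}_{\textup{q.m.}}^\uparrow$ by construction and inherits admissibility (the Pauli bound and partial orthonormality are preserved under any pointwise permutation of indices).

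Next I would verify that this rearrangement leaves both $\mathcal{M}$ and $\mathcal{F}$ invariant, so that the resulting sequence is still minimizing. The velocity-kinetic and Casimir parts of $\mathcal{F}$, as well as $\mathcal{M}$ itself, are sums of the form $\sum_j G(f_j(y,v))$ integrated in $(y,v)$, hence invariant under any permutation of $j$ that may depend on $y$. The quantum-kinetic term and the external-potential term couple $f_j$ and $\chi_j$ with the same index, so applying the same permutation to both leaves them invariant. Finally, the total density $\rho_{(\mathbf{f},\bchi)}(x)=\sum_j\rho_{f_j}(y)|\chi_j(x)|^2$ is visibly invariant under any joint $y$-dependent reindexing, which takes care of the Poisson term. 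A short measurability check (based on the fact that, for each $y$, the sorting permutation is determined by the countable family of measurable functions $\|\partial_z\chi_j^{(n)}(y,\cdot)\|_{L^2}^2$, which tends to $\infty$ in $j$ whenever the sum is finite) shows that the rearranged pair is again a proper admissible pair.

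The bound \eqref{weighted ell 1 bound} is then deferred to Lemma \ref{improved summability}. The underlying mechanism, which I expect that lemma to isolate, is the min-max principle for the Dirichlet operator $-\tfrac{1}{2}\partial_z^2$ on $H_0^1(0,1)$, whose eigenvalues are $\tfrac{\pi^2 j^2}{2}$. For any partially orthonormal system, this principle gives $\sum_{j=1}^{N}\|\partial_z\chi_j(y,\cdot)\|_{L^2}^2\ge \pi^2\sum_{j=1}^N j^2\gtrsim N^3$, and if the sequence has been rearranged to be non-decreasing in $j$, the pointwise bound $\|\partial_z\chi_j(y,\cdot)\|_{L^2}^2 \gtrsim j^2$ follows by $N\|\partial_z\chi_N(y,\cdot)\|^2_{L^2}\ge \sum_{j=1}^N\|\partial_z\chi_j(y,\cdot)\|^2_{L^2}$. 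Inserting this into the quantum-kinetic part of $\mathcal{E}(\mathbf{f}^{(n)},\bchi^{(n)})$ and using that every remaining term in $\mathcal{F}$ is nonnegative (by Assumption \ref{assump on beta} and $T\ge 0$) yields
\[
\sum_{j=1}^\infty j^2\|f_j^{(n)}\|_{L^1(\omega\times\mathbb{R}^2)}\;\lesssim\; \sum_{j=1}^\infty\iint_{\omega\times\mathbb{R}^2}\|\partial_z\chi_j^{(n)}(y,\cdot)\|_{L^2(0,1)}^2 f_j^{(n)}(y,v)\,dy\,dv\;\lesssim\;\mathcal{F}(\mathbf{f}^{(n)},\bchi^{(n)}),
\]
which is uniformly bounded by $\mathcal{F}_{\textup{min}}(M)$ up to $o(1)$.

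The main delicate point is not the algebra but the measurability of the $y$-dependent sorting permutation and the fact that the rearranged $\tilde{\mathbf{f}}^{(n)}$ retains admissibility (especially the measurability of each $\tilde{f}_j^{(n)}$ on $\omega\times\mathbb{R}^2$). This is essentially the content of the rearrangement section and follows from standard arguments once one writes $\sigma_y$ as a composition of finitely many transpositions on the preimages of measurable level sets. The verification of invariance of $\mathcal{M}$ and $\mathcal{F}$, together with the min-max lower bound on sorted quantum-kinetic energies, are otherwise routine.
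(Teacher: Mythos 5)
Your proposal is correct and follows essentially the same route as the paper: a $y$-dependent sorting rearrangement of the indices (Section \ref{sec: rearrangement of admissible pairs} and Lemma \ref{lemma: rearranged pair}), invariance of $\mathcal{M}$ and $\mathcal{F}$ under such joint permutations, and the min-max lower bound $\|\partial_z\chi_j(y,\cdot)\|_{L^2(0,1)}^2\gtrsim j^2$ for sorted partially orthonormal systems, which is exactly the content of Lemma \ref{improved summability}. The only cosmetic difference is that you derive the pointwise bound directly from $N\|\partial_z\chi_N\|^2\ge\sum_{j\le N}\|\partial_z\chi_j\|^2$ while the paper argues by contradiction; both yield the same constant $\pi^2j^2/3$.
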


\begin{remark}\label{prop: refinement 1, remark}
If $\bchi\in\mathcal{A}_{\textup{q.m.}}^\uparrow$, then $\|\partial_z\chi_j(y,\cdot)\|_{L^2(0,1)}\to\infty$ as $j\to\infty$, because $-\partial_z^2$ is unbounded as a quadratic form on any infinite-dimensional subspace in $H_0^1(0,1)$. Indeed, for all $j\geq 1$, a precise lower bound $\|\partial_z\chi_j(y,\cdot)\|_{L^2(0,1)}\geq \tfrac{\pi j}{\sqrt{3}}$ can be obtained (see \eqref{improved summability proof}).
\end{remark}

\begin{remark}
The uniform weighted $\ell^1(\mathbb{N})$-bound \eqref{weighted ell 1 bound} resolves the lack of compactness in $j$ mentioned in Remark \ref{difficulties} $(i)$ (see the proof of Lemma \ref{reduced problem} below).
\end{remark}

\subsection{Rearrangement of admissible pairs}\label{sec: rearrangement of admissible pairs}
We note that any permutation-valued function, with the phase space $\omega\times \R^2$, preserves the mass and the free energy. Precisely, let
$$\sigma=\sigma(j;y,v):\mathbb{N}\times\omega\times \R^2\to\mathbb{N}$$
be a function such that $\sigma=\sigma(\cdot;y,v):\mathbb{N}\to\mathbb{N}$ is a bijection for each $(y,v)\in\omega\times \R^2$. For an admissible pair $(\mathbf{f},\bchi)$, we define the $\sigma$-rearranged pair $(\mathbf{f}^\sigma,\bchi^\sigma)$ by 
\begin{equation}\label{rearrang def}
\big(f_j^\sigma(y,v), \chi_j^\sigma(x)\big)=\big(f_{\sigma(j;y,v)}(y,v),\chi_{\sigma(j;y,v)}(x)\big).
\end{equation}
Then, one can easily see that the rearranged pair is also admissible, and that the mass and the energy are invariant under the rearrangement, i.e.,
$$\mathcal{M}(\mathbf{f}^\sigma)=\mathcal{M}(\mathbf{f})\quad\textup{and}\quad\mathcal{F}(\mathbf{f}^\sigma,\bchi^\sigma)=\mathcal{F}(\mathbf{f},\bchi),$$
because the sums in the mass and the free energy functional $\sum_{j=1}^\infty f_j$, $\sum_{j=1}^\infty \beta (f_j ), \cdots$  $\sum_{j=1}^\infty \|\partial_z\chi_j\|_{L^2(0,1)}^2 {f_j}$ and $\rho_{(\mathbf{f}, \bchi)}$  are invariant under the rearrangement.

\subsection{Improved summability by rearrangement}

For Proposition \ref{prop: refinement 1}, we employ the following   rearrangements. 

\begin{lemma}[Rearrangement with non-decreasing quantum kinetic energies]\label{lemma: rearranged pair}
For $\bchi\in \mathcal{A}_{\textup{q.m.}}$, there exists $\sigma_\uparrow:\mathbb{N}\times\omega\to\mathbb{N}$ such that $\bchi^{\sigma_\uparrow}\in \mathcal{A}_{\textup{q.m.}}^\uparrow$.
\end{lemma}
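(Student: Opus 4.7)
The plan is to construct $\sigma_\uparrow$ pointwise in $y \in \omega$: for each fixed $y$, I would view the sequence $a_j(y) := \|\partial_z \chi_j(y,\cdot)\|_{L^2(0,1)}$ as a sequence of nonnegative real numbers (all finite, since $\chi_j(y,\cdot) \in H_0^1(0,1)$) and define $\sigma_\uparrow(\cdot;y)$ to be any bijection of $\mathbb{N}$ that arranges $\{a_j(y)\}_{j=1}^\infty$ in non-decreasing order. The only non-trivial point is showing that such a sorting bijection exists. A nonnegative sequence can be sorted in non-decreasing order if and only if, for every threshold $R>0$, the set $\{j : a_j(y) \leq R\}$ is finite; so the first substantive step is to establish $a_j(y) \to \infty$ as $j \to \infty$.

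For this, I would argue by contradiction using the partial orthonormality of $\bchi$. If $a_j(y)$ does not tend to $\infty$, some subsequence $\{\chi_{j_k}(y,\cdot)\}_{k=1}^\infty$ is bounded in $H_0^1(0,1)$. The Rellich-Kondrachov theorem then provides a further subsequence that converges strongly in $L^2(0,1)$. But orthonormality forces $\|\chi_{j_k}(y,\cdot)-\chi_{j_l}(y,\cdot)\|_{L^2(0,1)}^2 = 2$ for all $k \neq l$, which precludes any $L^2$-Cauchy subsequence. Hence $\{j : a_j(y) \leq R\}$ is finite for every $R>0$.

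With this finiteness in hand, I would define $\sigma_\uparrow(1;y)$ to be the smallest index realizing $\min_k a_k(y)$ (attained because $\{k : a_k(y) \leq a_1(y)\}$ is a finite nonempty set), and inductively set $\sigma_\uparrow(m+1;y)$ to be the smallest index not in $\{\sigma_\uparrow(1;y),\ldots,\sigma_\uparrow(m;y)\}$ achieving the minimum of $a_k(y)$ over $k$ outside this finite set. By construction, $\sigma_\uparrow(\cdot;y):\mathbb{N}\to\mathbb{N}$ is a bijection and $a_{\sigma_\uparrow(j;y)}(y)$ is non-decreasing in $j$. Extending trivially in $v$ by $\sigma_\uparrow(j;y,v):=\sigma_\uparrow(j;y)$ and invoking the rearrangement recipe \eqref{rearrang def} yields $\bchi^{\sigma_\uparrow} \in \mathcal{A}_{\textup{q.m.}}^\uparrow$.

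The main obstacle is just the claim $a_j(y)\to\infty$; once that is settled, sorting is a standard well-ordering argument. I do not foresee further technical complications, since the target class $\mathcal{A}_{\textup{q.m.}}^\uparrow$ only requires pointwise-in-$y$ monotonicity (so no measurability of $\sigma_\uparrow$ in $y$ is needed), and each $a_j(y)$ is automatically finite by admissibility.
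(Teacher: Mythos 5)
Your proposal is correct and follows essentially the same route as the paper: establish that $\|\partial_z\chi_j(y,\cdot)\|_{L^2(0,1)}\to\infty$ for each fixed $y$ (the paper asserts this from the unboundedness of $-\partial_z^2$ on infinite-dimensional subspaces of $H_0^1(0,1)$; your Rellich--Kondrachov plus orthonormality argument is a correct, more detailed justification of the same fact) and then sort pointwise in $y$. The explicit inductive construction of the sorting bijection and the observation that no measurability in $y$ is required are fine and consistent with how the paper uses the lemma.
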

\begin{proof}
For $y\in\omega$, we have $\|\partial_z\chi_j(y,\cdot)\|_{L^2(0,1)}\to\infty$, 
because $-\partial_z^2$ is unbounded on any infinite-dimensional subspace in $H_0^1(0,1)$. Thus, there exists a bijection ${\sigma_\uparrow}={\sigma_\uparrow}(\cdot;y):\mathbb{N}\to\mathbb{N}$ such that $\|\partial_z\chi_j^{\sigma_\uparrow}(y,\cdot)\|_{L^2(0,1)}$ is non-decreasing in $j$ and goes to infinity (see \eqref{rearrang def} for the definition of $\chi_j^{\sigma_\uparrow}$).
\end{proof}

If a pair is rearranged by Lemma \ref{lemma: rearranged pair}, then it enjoys a better summability in $j$. 

\begin{lemma}[Weighted $\ell^1(\mathbb{N})$-bound]\label{improved summability}
If $ (\mathbf{f}, \bchi) \in \mathcal{A}_{\textup{c.m.}}\times\mathcal{A}_{\textup{q.m.}}^\uparrow$,  then it holds that
$$
\sum_{j=1}^\infty j^2\|f_j\|_{L^1(\omega\times\mathbb{R}^2)}\leq\frac{3}{\pi^2}\sum_{j=1}^\infty\int_\omega \|\partial_z\chi_j(y,\cdot)\|_{L^2(0,1)}^2\rho_{f_j}(y)dy  \leq\frac{6}{\pi^2}  \mathcal{F}(\mathbf{f},\bchi).$$ 
\end{lemma}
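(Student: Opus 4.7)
My plan is to first establish the pointwise-in-$y$ lower bound
\[
\|\partial_z\chi_j(y,\cdot)\|_{L^2(0,1)}^2 \;\geq\; \frac{\pi^2 j^2}{3}
\]
for every $y\in\omega$ and every $j\geq 1$, and then to integrate this against $\rho_{f_j}(y)$ and sum in $j$. Since $\int_\omega \rho_{f_j}(y)\,dy=\|f_j\|_{L^1(\omega\times\mathbb{R}^2)}$ by definition of $\rho_{f_j}$, multiplying the pointwise bound by $\rho_{f_j}(y)\geq 0$, integrating over $\omega$, and summing over $j$ will immediately yield the first inequality in the statement.

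The pointwise bound comes from combining two ingredients. The first is the Ky Fan trace inequality applied to the Dirichlet Laplacian $-\partial_z^2$ on $(0,1)$, whose $k$-th eigenvalue is $\pi^2 k^2$: for each fixed $y$, the family $\{\chi_k(y,\cdot)\}_{k=1}^{j}$ is orthonormal in $L^2(0,1)$ by the partial orthonormality built into $\mathcal{A}_{\textup{q.m.}}$, so
\[
\sum_{k=1}^{j}\|\partial_z\chi_k(y,\cdot)\|_{L^2(0,1)}^{2}\;\geq\;\sum_{k=1}^{j}\pi^2 k^2 \;=\; \frac{\pi^2 j(j+1)(2j+1)}{6}\;\geq\;\frac{\pi^2 j^3}{3}.
\]
The second ingredient is the monotonicity hypothesis $\bchi\in\mathcal{A}_{\textup{q.m.}}^\uparrow$, which says that $\|\partial_z\chi_k(y,\cdot)\|_{L^2(0,1)}^2$ is non-decreasing in $k$, so its $j$-term sum is at most $j\cdot\|\partial_z\chi_j(y,\cdot)\|_{L^2(0,1)}^2$. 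Dividing yields $\|\partial_z\chi_j(y,\cdot)\|_{L^2(0,1)}^2\geq \pi^2 j^2/3$, which, besides powering the first inequality, is the quantitative form of the lower bound advertised in Remark~\ref{prop: refinement 1, remark}.

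For the second inequality, I will compare the quantum kinetic piece of the free energy against the whole of $\mathcal{F}(\mathbf{f},\bchi)$. Inspecting \eqref{energy} and \eqref{free energy}, the quantum kinetic contribution is exactly $\tfrac{1}{2}\sum_{j=1}^{\infty}\int_\omega\|\partial_z\chi_j(y,\cdot)\|_{L^2(0,1)}^2\rho_{f_j}(y)\,dy$, while every remaining term is non-negative: the translational kinetic term is manifestly so, $V_{\textup{ext}}\geq 0$ and $\beta\geq 0$ by Assumption~\ref{assump on beta}, $T\geq 0$, and the Poisson energy $\tfrac{1}{2}\int_\Omega|\nabla U_{\rho_{(\mathbf{f},\bchi)}}|^2$ is non-negative by inspection. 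Therefore $\sum_{j}\int_\omega\|\partial_z\chi_j(y,\cdot)\|_{L^2(0,1)}^2\rho_{f_j}(y)\,dy\leq 2\mathcal{F}(\mathbf{f},\bchi)$, producing the factor $\tfrac{6}{\pi^2}$ when combined with the first inequality.

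I do not expect a serious obstacle here; the only non-routine move is recognizing that the min-max/Ky Fan trace inequality, coupled with the monotonicity encoded in $\mathcal{A}_{\textup{q.m.}}^\uparrow$, upgrades the trivial $\chi_j$-by-$\chi_j$ bound $\|\partial_z\chi_j(y,\cdot)\|_{L^2(0,1)}^2\geq\pi^2$ (the first Dirichlet eigenvalue) to the quadratic-in-$j$ lower bound needed for the weighted $\ell^1(\mathbb{N})$-summability in \eqref{weighted ell 1 bound}. Everything else is positivity bookkeeping.
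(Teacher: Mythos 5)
Your proof is correct and follows essentially the same route as the paper: the pointwise bound $\|\partial_z\chi_j(y,\cdot)\|_{L^2(0,1)}^2\geq \pi^2 j^2/3$ obtained from the min-max (Ky Fan) trace inequality for the Dirichlet Laplacian combined with the monotonicity of $j\mapsto\|\partial_z\chi_j(y,\cdot)\|_{L^2(0,1)}$, followed by integration against $\rho_{f_j}$ and comparison with the non-negative terms of the free energy. The only cosmetic difference is that the paper phrases the pointwise bound as a proof by contradiction, whereas you argue it directly; the ingredients are identical.
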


\begin{proof} 
It suffices to show that for all $j\geq1$ and all $y\in\omega$,
\begin{equation}\label{improved summability proof}
\|\partial_z\chi_j(y,\cdot)\|_{L^2(0,1)}\geq \tfrac{\pi j}{\sqrt{3}},
\end{equation}
because applying \eqref{improved summability proof} to the free energy \eqref{free energy}, the lemma follows. Indeed, we assume that there exist $\tilde{y}\in\omega$ and $\tilde{j}\geq 1$ such that $\|\partial_z\chi_{\tilde{j}}(\tilde{y},\cdot)\|_{L^2(0,1)}< \frac{\pi \tilde{j}}{\sqrt{3}}$. Then, since $\|\partial_z\chi_j(\tilde{y},\cdot)\|_{L^2(0,1)}$ does not decrease, we have $\sum_{j=1}^{\tilde{j}}\|\partial_z\chi_j(\tilde{y},\cdot)\|_{L^2(0,1)}^2<\frac{\pi^2(\tilde{j})^3}{3}$. On the other hand, the min-max principle \cite[Theorem 12.1]{LiebLoss} implies
$$\sum_{j=1}^{\tilde{j}}\big\langle (-\partial_z^2)\chi_j(\tilde{y},\cdot),\chi_j(\tilde{y},\cdot)\big\rangle_{L^2(0,1)}\geq\sum_{j=1}^{\tilde{j}}(\pi j)^2=\frac{\pi^2\tilde{j}(\tilde{j}+1)(2\tilde{j}+1)}{6},$$
which deduces a contradiction.
\end{proof}

\subsection{Proof of Proposition \ref{prop: refinement 1}}
Rearranging a minimizing sequence by Lemma \ref{lemma: rearranged pair} but still denoting by $(\mathbf{f}^{(n)},\bchi^{(n)})$, we may assume that 
 $\bchi^{(n)}\in \mathcal{A}_{\textup{q.m.}}^\uparrow$. Then, the weighted norm bound \eqref{weighted ell 1 bound} follows from Lemma \ref{improved summability}.

\subsection{Monotone kinetic distributions by rearrangement}
Later, we employ another rearrangement to make kinetic distributions non-increasing. This rearrangement also preserves the mass and the free energy.

\begin{lemma}[Rearrangement with non-increasing  kinetic distributions]\label{rearrange distrib}
For $\mathbf{f}\in \mathcal{A}_{\textup{c.m.}}$, there exists $\sigma_\downarrow:\mathbb{N}\times\omega\times \R^2\to\mathbb{N}$ such that $f_j^{\sigma_\downarrow}(y,v)$ is non-increasing in $j\geq 1$ for a.e. $(y,v)\in\omega\times\mathbb{R}^2$.
\end{lemma}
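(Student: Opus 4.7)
The plan is to build $\sigma_\downarrow$ by a pointwise decreasing rearrangement of the sequence $\{f_j(y,v)\}_{j\ge1}$, breaking ties using the natural ordering on $\mathbb{N}$, and then verifying measurability in $(y,v)$. The key preliminary observation is that $\mathbf{f}\in\mathcal{A}_{\textup{c.m.}}\subset\ell^1(\mathbb{N};L^1(\omega\times\mathbb{R}^2))$ together with Fubini gives $\sum_{j=1}^\infty f_j(y,v)<\infty$, hence $f_j(y,v)\to 0$ as $j\to\infty$, for a.e. $(y,v)\in\omega\times\mathbb{R}^2$. On this full-measure set, the maximum of $\{f_k(y,v)\}_{k\ge1}$ is attained at some finite index, so we can define $\sigma_\downarrow(1;y,v)$ as the smallest such index, remove it from $\mathbb{N}$, and iterate to obtain $\sigma_\downarrow(j;y,v)$. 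By construction, $\sigma_\downarrow(\cdot;y,v):\mathbb{N}\to\mathbb{N}$ is a bijection and $f_{\sigma_\downarrow(j;y,v)}(y,v)$ is non-increasing in $j$; outside the full-measure set we may simply set $\sigma_\downarrow(\cdot;y,v)=\mathrm{id}$.

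The only nontrivial point is measurability of $\sigma_\downarrow$ (equivalently of $\mathbf{f}^{\sigma_\downarrow}$). First I would note that the $j$-th largest value has a clean measurable description:
\begin{equation*}
f_j^{\sigma_\downarrow}(y,v)=\inf\bigl\{t\ge 0:\#\{k\in\mathbb{N}:f_k(y,v)>t\}<j\bigr\},
\end{equation*}
and since $\#\{k:f_k(y,v)>t\}=\sum_{k=1}^\infty \mathbbm{1}_{\{f_k>t\}}(y,v)$ is jointly measurable in $(y,v,t)$, standard arguments yield measurability of $f_j^{\sigma_\downarrow}$. For $\sigma_\downarrow$ itself, the event $\{\sigma_\downarrow(j;\cdot,\cdot)=k\}$ is characterised inductively by the measurable conditions $f_k(y,v)=f_j^{\sigma_\downarrow}(y,v)$, $\sigma_\downarrow(i;y,v)\neq k$ for $i<j$, and $f_\ell(y,v)<f_j^{\sigma_\downarrow}(y,v)$ or $\ell>k$ for every $\ell\notin\{\sigma_\downarrow(1;y,v),\dots,\sigma_\downarrow(j-1;y,v)\}$ with $\ell<k$.

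The main (minor) obstacle is simply handling the tie-breaking carefully so that $\sigma_\downarrow$ is a genuine permutation and measurable; this is routine once the lexicographic convention above is fixed. After this, the lemma is immediate, and the invariance of mass and free energy under this rearrangement is inherited from the general discussion in Section \ref{sec: rearrangement of admissible pairs}.
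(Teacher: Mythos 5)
Your proof is correct and follows essentially the same route as the paper's (which simply observes that $\mathcal{M}(\mathbf{f})<\infty$ forces $f_j(y,v)\to 0$ for a.e.\ $(y,v)$ and then rearranges decreasingly); you additionally supply the measurability discussion that the paper leaves entirely implicit. One small caveat, shared with the paper's own proof: the greedy selection is automatically injective but need not be surjective when infinitely many $f_k(y,v)$ are positive and some index has $f_k(y,v)=0$ (only such zero-value indices can ever be skipped, since $f_j(y,v)\to 0$ guarantees every positive-value index is selected in finitely many steps), so you should either remark that omitting those indices is harmless for every $j$-summed quantity appearing in $\mathcal{M}$ and $\mathcal{F}$, or restrict the bijectivity claim accordingly.
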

\begin{proof}
For a.e. $(y,v)\in\omega \times \R^2$, we have $f_j(y,v) \to0$ as $j\to \infty$, 
because $\mathcal{M}(\mathbf{f})<\infty$. Thus, there exists a bijection $\sigma_\downarrow=\sigma_\downarrow(\cdot;y,v): \mathbb{N}\to\mathbb{N}$ such that $f_j^{\sigma_\downarrow}(y,v) $ is non-increasing in $j$ (see \eqref{rearrang def} for the definition of $f_j^{\sigma_\downarrow}$).
\end{proof}

\section{Second refinement of a minimizing sequence: finite non-increasing band structure}

In the previous section, a minimizing sequence $\{(\mathbf{f}^{(n)},\bchi^{(n)})\}_{n=1}^\infty$ is refined so that the kinetic distributions $\{\mathbf{f}^{(n)}\}_{n=1}^\infty$ has a weak subsequential limit, while the difficulties mentioned in Remark \ref{difficulties} $(ii)$ have not been overcome for the quantum part $\{\bchi^{(n)}\}_{n=1}^\infty\subset\mathcal{A}_{\textup{q.m.}}^\uparrow$.

In this section, fixing $\bchi\in\mathcal{A}_{\textup{q.m.}}^\uparrow$, we consider the   ``partial" variational problem
\begin{equation}\label{free energy minimization fixing bchi}
\mathcal{F}_{\bchi;\textup{min}}(M)=\inf\Big\{\mathcal{F}_{\bchi}(\mathbf{f})=\mathcal{F}(\mathbf{f},\bchi):\ \mathbf{f}\in\mathcal{A}_{\textup{c.m.}}\textup{ and }\mathcal{M}(\mathbf{f})=M\Big\}.
\end{equation}
Then, we further refine the minimizing sequence in Proposition \ref{prop: refinement 1} by replacing $\mathbf{f}^{(n)}$ by a minimizer for the problem $\mathcal{F}_{\bchi^{(n)};\textup{min}}(M)$. We assert that this secondly refined minimizing sequence has only finitely many non-trivial subbands with preferable monotonicity.

\begin{proposition}[Refined minimizing sequence with finite subbands]\label{prop: refinement 2}
For $M>0$  and $T\ge 0$, the variational problem $\mathcal{F}_{\textup{min}}(M)$ has a minimizing sequence $$\{(\mathbf{f}^{(n)},\bchi^{(n)})\}_{n=1}^\infty\subset \mathcal{A}_{\textup{c.m.}}^\downarrow\times\mathcal{A}_{\textup{q.m.}},$$
where $\mathcal{A}_{\textup{c.m.}}^\downarrow$ is the subclass of non-increasing kinetic admissible distributions, i.e., for a.e. $(y,v)\in\omega\times\mathbb{R}^2$, $f_j(y,v)$ is non-increasing in $j\geq 1$. Moreover, it satisfies the following.
\begin{enumerate}[$(i)$]
\item (Finite subband structure) There exists $J\geq1$, independent of $n\geq 1$, such that $f_j^{(n)}\equiv0$ for all $j\geq J+1$.
\item (Bounded densities) $\rho_{f_j^{(n)}}(y)$ are bounded uniformly in $y\in\omega$ and $j,n\geq1$.
\end{enumerate}
\end{proposition}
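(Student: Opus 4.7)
The plan is a two-stage construction: for each $n$, first replace $\mathbf{f}^{(n)}$ by a minimizer $\tilde{\mathbf{f}}^{(n)}$ of the partial problem $\mathcal{F}_{\bchi^{(n)};\textup{min}}(M)$ in \eqref{free energy minimization fixing bchi} with $\bchi^{(n)}\in\mathcal{A}_{\textup{q.m.}}^\uparrow$ held fixed, and then rearrange the pair $(\tilde{\mathbf{f}}^{(n)},\bchi^{(n)})$ along the $j$-index (by a $y$-dependent permutation) so that the distributions become non-increasing. Because $\mathcal{F}(\tilde{\mathbf{f}}^{(n)},\bchi^{(n)})\leq\mathcal{F}(\mathbf{f}^{(n)},\bchi^{(n)})$ and the mass constraint is preserved at each stage, this modification keeps a minimizing sequence for $\mathcal{F}_{\textup{min}}(M)$, and the rearrangement of Section \ref{sec: rearrangement of admissible pairs} preserves both the mass and the free energy.

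For existence of a partial minimizer with $\bchi\in\mathcal{A}_{\textup{q.m.}}^\uparrow$ fixed, I would run a concentration-compactness argument on a minimizing sequence $\{\mathbf{g}^{(k)}\}_{k=1}^\infty$ for $\mathcal{F}_{\bchi;\textup{min}}(M)$. The key a priori controls are the Pauli bound $0\leq g_j^{(k)}\leq 1$, the uniform kinetic bound $\sum_j\iint|v|^2 g_j^{(k)}\lesssim\mathcal{F}(\mathbf{g}^{(k)},\bchi)$ ruling out escape in $v$, the uniform weighted $\ell^1$-bound $\sum_j j^2\|g_j^{(k)}\|_{L^1}\lesssim\mathcal{F}(\mathbf{g}^{(k)},\bchi)$ from Lemma \ref{improved summability} ruling out escape in $j$, and the boundedness of $\omega$. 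These combine to give tightness of $\{\mathbf{g}^{(k)}\}$, so a weak-$\ast$ limit $\tilde{\mathbf{f}}$ exists with $\mathcal{M}(\tilde{\mathbf{f}})=M$. Weak lower semicontinuity of the kinetic term, the strictly convex Casimir functional, and the quadratic positive semidefinite Poisson term (the last via weak convergence in $H^1$ of $U_{\rho^{(k)}}$) then yields $\mathcal{F}_{\bchi}(\tilde{\mathbf{f}})=\mathcal{F}_{\bchi;\textup{min}}(M)$.

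A standard Euler-Lagrange analysis with Lagrange multiplier $\mu^{(n)}$ and the active Pauli/positivity constraints yields the self-consistent identity
\begin{equation*}
\tilde{f}_j^{(n)}(y,v)=\tilde{\beta}\!\left(\mu^{(n)}-\tfrac{|v|^2}{2}-\tilde{\lambda}_j^{(n)}(y)\right),\qquad \tilde{\lambda}_j^{(n)}(y):=\tfrac{1}{2}\|\partial_z\chi_j^{(n)}(y,\cdot)\|_{L^2(0,1)}^2+\int_0^1(V_{\textup{ext}}+U_{\rho^{(n)}})|\chi_j^{(n)}|^2\,dz,
\end{equation*}
where $U_{\rho^{(n)}}\geq 0$ by the maximum principle. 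Since $\tilde\beta$ is non-decreasing and vanishes on negatives, and since Remark \ref{prop: refinement 1, remark} gives $\tilde{\lambda}_j^{(n)}(y)\geq\pi^2 j^2/6$, we obtain $\tilde{f}_j^{(n)}\equiv 0$ whenever $\pi^2 j^2/6>\mu^{(n)}$. A uniform $J$ then follows from a uniform upper bound on $\mu^{(n)}$, which I would derive by combining $\mathcal{F}(\tilde{\mathbf{f}}^{(n)},\bchi^{(n)})\leq\mathcal{F}_{\textup{min}}(M)+1$ with the mass identity $M=\sum_j\iint\tilde\beta(\mu^{(n)}-|v|^2/2-\tilde\lambda_j^{(n)})\,dy\,dv$ and the density estimates of Lemma \ref{density estimate}; the bounded-density claim is then immediate from $\tilde f_j^{(n)}\leq\tilde\beta(\mu^{(n)}-|v|^2/2)$. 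For monotonicity, I would choose for each $y$ a permutation $\sigma_n(\cdot;y)$ that sorts $\{\tilde{\lambda}_j^{(n)}(y)\}_j$ non-decreasingly (possible since $\tilde\lambda_j^{(n)}(y)\to\infty$) and rearrange $(\tilde{\mathbf{f}}^{(n)},\bchi^{(n)})$ by this $\sigma_n$; the self-consistent identity together with the non-decreasingness of $\tilde\beta$ then forces the rearranged distributions to be non-increasing in $j$. The rearranged quantum state remains in $\mathcal{A}_{\textup{q.m.}}$ (though not necessarily in $\mathcal{A}_{\textup{q.m.}}^\uparrow$), which is allowed by the statement. The main obstacle will be the concentration-compactness step—particularly preventing mass loss and handling the Poisson term under weak convergence—together with deriving the uniform upper bound on $\mu^{(n)}$ required to make $J$ independent of $n$.
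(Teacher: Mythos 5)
Your proposal follows essentially the same route as the paper: replace $\mathbf{f}^{(n)}$ by the minimizer of the partial problem $\mathcal{F}_{\bchi^{(n)};\textup{min}}(M)$ obtained via concentration-compactness (using the weighted $\ell^1$-bound of Lemma \ref{improved summability} to prevent escape in $j$), read off the self-consistent equation $f_j^{(n)}=\tilde{\beta}(\mu^{(n)}-\tfrac{|v|^2}{2}-h_j^{(n)}(y))$, rearrange in $j$ by a $y$-dependent permutation sorting the $h_j^{(n)}(y)$, and conclude finiteness from $h_j^{(n)}(y)\geq \pi^2 j^2/6$ together with a uniform bound on $\mu^{(n)}$. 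The only place you are vaguer than the paper is the uniform bound on $\mu^{(n)}$, which the paper obtains (Lemma \ref{euler lagrange equation for f}, Step 3) by splitting the mass as $M=A+2B$ and bounding $\mathcal{F}_{\bchi;\textup{min}}(M)\geq \tfrac{\mu-T\beta'(1)}{2}(A+B)$ rather than via the density estimate of Lemma \ref{density estimate}, but the ingredients you name suffice and the argument is correct.
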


\subsection{Free energy minimization for fixed quantum states}
First, we show existence of a minimizer for the simpler partial problem.

\begin{lemma}[Existence of a minimizer for the partial variational problem $\mathcal{F}_{\bchi;\textup{min}}(M)$]\label{reduced problem}
For $T\ge 0$,  $M>0$ and $\bchi\in\mathcal{A}_{\textup{q.m.}}^\uparrow$, the problem $\mathcal{F}_{\bchi;\textup{min}}(M)$ has a unique minimizer $\mathbf{f}^*$.
\end{lemma}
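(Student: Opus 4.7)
My plan is to argue by the direct method of the calculus of variations. The hypothesis $\bchi \in \mathcal{A}_{\textup{q.m.}}^\uparrow$ is precisely what makes Lemma~\ref{improved summability} applicable, which removes both the quantum-state compactness obstacle and the $j$-index dichotomy at once, so what remains is essentially a single-component variational problem in $\mathbf{f}$ alone.

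Fix a minimizing sequence $\{\mathbf{f}^{(n)}\}\subset \mathcal{A}_{\textup{c.m.}}$ with $\mathcal{M}(\mathbf{f}^{(n)})=M$ and $\mathcal{F}_{\bchi}(\mathbf{f}^{(n)})\to\mathcal{F}_{\bchi;\textup{min}}(M)$ (which is finite by testing against a smooth single-subband pair). Lemma~\ref{improved summability} gives $\sum_j j^2\|f_j^{(n)}\|_{L^1(\omega\times\R^2)}\lesssim\mathcal{F}_{\bchi}(\mathbf{f}^{(n)})\le C$ uniformly in $n$, which combined with $0\le f_j^{(n)}\le 1$, the boundedness of $\omega$, and the uniform $|v|^2$-moment controlled by $\mathcal{F}_{\bchi}$ delivers weak $L^1$-compactness of each $\{f_j^{(n)}\}_n$ via Dunford--Pettis. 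A diagonal extraction in $j$ then produces $f_j^{(n_k)}\rightharpoonup f_j^*$ weakly in $L^1(\omega\times\R^2)$ for every $j$, with $0\le f_j^*\le 1$; the uniform tail bound $\sum_{j>J}\|f_j^{(n)}\|_{L^1}\le C/J^2$ forces $\mathcal{M}(\mathbf{f}^*)=M$, so $\mathbf{f}^*\in\mathcal{A}_{\textup{c.m.}}$ is admissible.

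Next I would pass termwise to the $\liminf$ in $\mathcal{F}_{\bchi}$. The $v$-kinetic, confinement, and (for $T>0$) Casimir pieces are non-negative and convex in $f_j$, so weak $L^1$ lower semicontinuity follows from the classical convex-integrand theorem, and Fatou closes the outer $j$-sum. The external-potential piece is weakly continuous since its coefficient $\int_0^1 V_{\textup{ext}}(y,\cdot)|\chi_j|^2\,dz$ is bounded by $\|V_{\textup{ext}}\|_{L^\infty(\Omega)}$ (using partial orthonormality). The only genuine nonlinearity is the Poisson piece $\tfrac12\|\nabla U_{\rho^{(n)}}\|_{L^2(\Omega)}^2$: since $\mathbf{f}\mapsto\rho_{(\mathbf{f},\bchi)}$ is linear, the weak $L^1$-convergence of each $f_j^{(n)}$ together with the $j$-tail control gives $\rho^{(n)}\rightharpoonup\rho^*$ weakly in $L^1(\Omega)$, and convexity of the quadratic form $\rho\mapsto\tfrac12\int|\nabla U_\rho|^2$ supplies the required lower semicontinuity. (Strong $H^1$-convergence of $U_{\rho^{(n)}}$ would follow from Lemma~\ref{density estimate}, which places $\rho^{(n)}$ uniformly in some $L^p$ with $p>1$, plus elliptic regularity and Rellich--Kondrachov, but this upgrade is not needed for existence.) Assembling the pieces yields $\mathcal{F}_{\bchi}(\mathbf{f}^*)\le\mathcal{F}_{\bchi;\textup{min}}(M)$, so $\mathbf{f}^*$ is a minimizer.

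For uniqueness I split cases. When $T>0$, strict convexity of $\beta$ makes $\mathcal{C}_\beta$ strictly convex, hence $\mathcal{F}_\bchi$ strictly convex on the convex constraint set, giving immediate uniqueness. When $T=0$, if $\mathbf{f}_1^*,\mathbf{f}_2^*$ are two minimizers with densities $\rho_1^*,\rho_2^*$, testing on the midpoint $\tfrac12(\mathbf{f}_1^*+\mathbf{f}_2^*)$ and applying the parallelogram identity to the (linear) Poisson solve forces $\nabla U_{\rho_1^*}=\nabla U_{\rho_2^*}$, hence $\rho_1^*=\rho_2^*$; the Euler--Lagrange analysis against Pauli-admissible perturbations then produces the Fermi-sea form $f_j^*(y,v)=\mathbbm{1}_{\{\mu-|v|^2/2-e_j(y)\ge 0\}}$ with $e_j(y):=\tfrac12\|\partial_z\chi_j(y,\cdot)\|_{L^2}^2+\int_0^1 (U_{\rho^*}+V_{\textup{ext}})|\chi_j|^2\,dz$, and $\mu$ is pinned by the mass constraint, so $\mathbf{f}^*$ is unique a.e. I expect the main obstacle throughout to be the limit passage in the Poisson term; it hinges entirely on the $j$-tail control afforded by Lemma~\ref{improved summability}, since without that bound mass could leak to $j=\infty$ at fixed $\bchi$ and the weak limit $\rho^*$ could strictly undershoot the sequence.
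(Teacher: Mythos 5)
Your proposal follows essentially the same route as the paper: the direct method with weak compactness of each $\{f_j^{(n)}\}_n$, termwise lower semicontinuity from convexity, the weighted $j^2$-bound of Lemma \ref{improved summability} to control the tail in $j$ and preserve the mass constraint, and convexity of $\mathcal{F}_{\bchi}$ for uniqueness. The one point to tighten is the Poisson term: weak $L^1(\Omega)$ convergence of $\rho^{(n)}$ alone does not yield lower semicontinuity of $\rho\mapsto\tfrac12\|\nabla U_\rho\|_{L^2(\Omega)}^2$ (a convex functional is weakly l.s.c.\ only on a space where it is strongly l.s.c., and the Poisson solve requires at least $L^{6/5}(\Omega)$), so the uniform $L^p$ bound from Lemma \ref{density estimate} that you set aside as optional is in fact needed — the paper accordingly runs this step with weak convergence in $L^{6/5}(\Omega)$.
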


For the proof, we recall that the weak formulation of the Poisson equation \eqref{Poisson equation} is stated as 
\begin{equation}\label{weak Poisson}
\int_{\Omega}\nabla U\cdot\nabla \varphi dx=\int_{\Omega}\rho \varphi dx\quad\textup{for all }\varphi\in H_\omega^1(\Omega),
\end{equation}
where
$$H_\omega^1(\Omega):=\Big\{u\in H^1(\Omega): u|_{\partial\omega\times(0,1)}=0 \Big\}.$$
By the the Lax-Milgram theorem and the Poincar\'e inequality, a density $\rho\in L^{\frac{6}{5}}(\Omega)$ uniquely determines a potential $U_\rho\in H_\omega^1(\Omega)$ solving the equation \eqref{weak Poisson}. 
Thus, we have the following.
\begin{lemma}[Potential function]\label{potential function}
The density-to-potential map $\rho\mapsto U_\rho: L^{\frac{6}{5}}(\Omega)\to H_\omega^1(\Omega)$ is linear and bounded.
\end{lemma}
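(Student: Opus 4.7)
The plan is to apply the Lax-Milgram theorem directly to the weak formulation \eqref{weak Poisson} and then read off linearity and boundedness. First, I would equip $H_\omega^1(\Omega)$ with the inner product $\langle u,v\rangle=\int_\Omega \nabla u\cdot\nabla v\,dx$. Because every element of $H_\omega^1(\Omega)$ vanishes on the lateral piece $\partial\omega\times(0,1)$, which has positive $2$-dimensional measure in $\partial\Omega$, the standard Poincar\'e inequality gives $\|u\|_{L^2(\Omega)}\lesssim\|\nabla u\|_{L^2(\Omega)}$, so this inner product induces a norm equivalent to $\|\cdot\|_{H^1(\Omega)}$ and $H_\omega^1(\Omega)$ is a Hilbert space. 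The bilinear form $a(U,\varphi)=\int_\Omega \nabla U\cdot\nabla\varphi\,dx$ is trivially continuous and coercive (both with constant $1$) in this norm.

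Next, for $\rho\in L^{6/5}(\Omega)$, I would define the linear functional $L(\varphi):=\int_\Omega \rho\,\varphi\,dx$ on $H_\omega^1(\Omega)$. By H\"older's inequality and the three-dimensional Sobolev embedding $H^1(\Omega)\hookrightarrow L^6(\Omega)$,
\[
|L(\varphi)|\le \|\rho\|_{L^{6/5}(\Omega)}\|\varphi\|_{L^6(\Omega)}\lesssim \|\rho\|_{L^{6/5}(\Omega)}\|\nabla\varphi\|_{L^2(\Omega)},
\]
so $L$ is bounded with norm $\lesssim\|\rho\|_{L^{6/5}(\Omega)}$. The Lax-Milgram theorem then produces a unique $U_\rho\in H_\omega^1(\Omega)$ satisfying \eqref{weak Poisson}. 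Testing \eqref{weak Poisson} with $\varphi=U_\rho$ and applying the same H\"older-Sobolev chain,
\[
\|\nabla U_\rho\|_{L^2(\Omega)}^2=\int_\Omega \rho\, U_\rho\,dx\lesssim \|\rho\|_{L^{6/5}(\Omega)}\|\nabla U_\rho\|_{L^2(\Omega)},
\]
which together with Poincar\'e yields $\|U_\rho\|_{H^1(\Omega)}\lesssim \|\rho\|_{L^{6/5}(\Omega)}$, i.e.\ boundedness of the map.

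Linearity is then an immediate consequence of uniqueness: for scalars $\alpha,\beta\in\R$ and densities $\rho_1,\rho_2\in L^{6/5}(\Omega)$, the function $\alpha U_{\rho_1}+\beta U_{\rho_2}\in H_\omega^1(\Omega)$ satisfies \eqref{weak Poisson} with right-hand side $\alpha\rho_1+\beta\rho_2$ by linearity of both sides, so uniqueness forces $U_{\alpha\rho_1+\beta\rho_2}=\alpha U_{\rho_1}+\beta U_{\rho_2}$. There is essentially no obstacle here; the only point worth double-checking is that a lateral Dirichlet condition together with the top/bottom Neumann condition is enough for the Poincar\'e inequality, which is standard because the Dirichlet part of $\partial\Omega$ has positive surface measure.
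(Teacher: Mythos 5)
Your proposal is correct and is exactly the argument the paper intends: the paper proves this lemma by invoking the Lax--Milgram theorem and the Poincar\'e inequality for the weak formulation \eqref{weak Poisson}, and your H\"older--Sobolev estimate with $H^1(\Omega)\hookrightarrow L^6(\Omega)$ is the standard way to verify the boundedness of the right-hand-side functional. Nothing is missing.
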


\begin{proof}[Proof of Lemma \ref{reduced problem}]
It suffices to show existence, because the convexity of the functional $\mathcal{F}_{\bchi}(\mathbf{f})$ implies uniqueness. Let $\{\mathbf{f}^{(n)}\}_{n=1}^\infty$ be a minimizing sequence for $\mathcal{F}_{\bchi;\textup{min}}(M)$. Then, passing to a subsequence, we may assume that for each $j$, ${f}_j^{(n)}\rightharpoonup f_j^*$ in $L^r(\omega\times \R^2)$ for any $1<r<\infty$ so that $\mathbf{f}^*\in \mathcal{A}_{\textup{c.m.}}$. We will that $\mathbf{f}^*$ is a minimizer for $\mathcal{F}_{\bchi;\textup{min}}(M)$.

Note that by \cite[Corollary 3.9]{Brezis} and the positive terms in the series, the following quantities are reduced in the weak limit;
$$\liminf_{n\to \infty}\sum_{j=1}^\infty\||v|^2 {f}_j^{(n)}\|_{L^1(\omega \times \R^2)}\ge \sum_{j=1}^\infty \||v|^2 {f}_j\|_{L^1(\omega \times \R^2)},\quad \liminf_{n\to \infty}\mathcal{C}_\beta(\mathbf{f}^{(n)}) \geq \mathcal{C}_\beta(\mathbf{f}^*)$$
and
\begin{equation}\label{quantum energy lower bound}
\begin{aligned}
&\liminf_{n\to \infty}\sum_{j=1}^\infty\iint_{\omega\times\mathbb{R}^2} \langle (-\tfrac{\partial_z^2}{2}+V_{\textup{ext}}) \chi_j, \chi_j\rangle_{L^2 (0,1)}{f}_j^{(n)}dydv\\
&\ge \sum_{j=1}^\infty\iint_{\omega\times\mathbb{R}^2}\langle (-\tfrac{\partial_z^2}{2}+V_{\textup{ext}}) \chi_j, \chi_j\rangle_{L^2 (0,1)}{f}_j^* dydv.
\end{aligned}
\end{equation}
In addition, by the uniform weighted norm bound (Lemma \ref{improved summability}), we have
\begin{equation}\label{weighted norm bound for Step 2}
\mathcal{F}_{\bchi; \textup{min}}(M)\gtrsim\liminf_{n\to \infty}\sum_{j=1}^\infty j^2\|f_j^{(n)}\|_{ L^1(\omega\times\mathbb{R}^2)}\geq \sum_{j=1}^\infty j^2\|f_j^*\|_{ L^1(\omega\times\mathbb{R}^2)}.
\end{equation}
Thus, except the nonlinear potential energy, all terms in the free energy are clearly reduced in the weak limit (see \eqref{energy} and \eqref{free energy}). For the nonlinear potential energy, we claim that as $n\to\infty$,
\begin{equation}\label{l3weak}
\rho_{(\mathbf{f}^{(n)}, \bchi)}\rightharpoonup\rho_{(\mathbf{f}^*, \bchi)} \textup{ in } L^\frac{6}{5}(\Omega).
\end{equation}
Indeed, by the density estimate (Lemma \ref{density estimate}), it suffices to show that for $g \in C_c^\infty(\omega)$, 
\begin{equation}\label{l1weak}
\lim_{n\to\infty}\int_\Omega (\rho_{(\mathbf{f}^{(n)}, \bchi)}- \rho_{(\mathbf{f}^*, \bchi)})gdy=0.
\end{equation}
For small $\epsilon>0$, by the weighted norm bound (Lemma \ref{improved summability}), taking large $J_\epsilon\geq1$, we obtain 
$$\int_\Omega (\rho_{(\mathbf{f}^{(n)}, \bchi)}-\rho_{(\mathbf{f}^*, \bchi)})g dx=\sum_{j=1}^{J_\epsilon}\int_\Omega (\rho_{f_j^{(n)}}-\rho_{f_j^*})|\chi_j|^2g dx+O(\epsilon).$$
On the other hand, we have
$$\int_\Omega (\rho_{f_j^{(n)}}-\rho_{f_j^*})|\chi_j|^2g dx=\int_{\omega\times\mathbb{R}^2} (f_j^{(n)}-f_j^*)\mathbbm{1}_{|v|\leq R}\left(\int_0^1|\chi_j|^2gdz\right) dx+O(R^{-2})\to 0$$
taking $n\to\infty$ and then $R\to\infty$. Hence, the claim \eqref{l3weak} is proved. Now, we observe from Lemma \ref{potential function} that the functional $\rho\mapsto \int_\Omega |\nabla U_\rho|^2dx$ is lower semi-continuous on $L^\frac65 (\Omega)\to \R$ in the strong topology.  Thus, by the weak convergence \eqref{l3weak} and the convexity of the map $\rho\mapsto \int_\Omega |\nabla U_\rho|^2dx$, we conclude that
$$\liminf_{n\to\infty}\|\nabla U_{(\mathbf{f}^{(n)},\bchi )}\|_{L^2(\Omega )}\ge\|\nabla U_{(\mathbf{f}^*,\bchi)}\|_{L^2(\Omega )}$$
holds (see \cite[Corollary 3.9]{Brezis}). Therefore, we conclude that 
$$\mathcal{F}_{\bchi;\textup{min}}(M)=\liminf_{n\to \infty}\mathcal{E}(\mathbf{f}^{(n)},\bchi)\ge \mathcal{E}(\mathbf{f},\bchi).$$

It remains to show that $\mathbf{f}^*=\{f_j^*\}_{j=1}^\infty$ is admissible for the problem $\mathcal{F}_{\bchi;\textup{min}}(M)$. Indeed, for any small $\epsilon>0$, by Lemma \ref{improved summability} and \eqref{weighted norm bound for Step 2}, there exist large $J_\epsilon, R_\epsilon\geq 1$ such that
$$\begin{aligned}
\mathcal{M}(\mathbf{f}^{(n)})&=\sum_{j=1}^{J_\epsilon} \iint_{\omega\times\mathbb{R}^2}\mathbbm{1}_{|v|\leq R_\epsilon}{f}_j^{(n)}dydv+O(\epsilon),\\
\mathcal{M}(\mathbf{f}^*)&=\sum_{j=1}^{J_\epsilon} \iint_{\omega\times\mathbb{R}^2}\mathbbm{1}_{|v|\leq R_\epsilon}{f}_j^*dydv+O(\epsilon).
\end{aligned}$$
Hence, taking $n\to\infty$, it follows from  that 
$$M=\mathcal{M}(\mathbf{f}^{(n)})\to\sum_{j=1}^{J_\epsilon} \iint_{\omega\times\mathbb{R}^2} \mathbbm{1}_{|v|\leq R_\epsilon}{f}_j^*dydv+O(\epsilon)=\mathcal{M}(\mathbf{f}^*)+O(\epsilon).$$
This proves that $\mathcal{M}(\mathbf{f}^*)=M$, since $\epsilon>0$ is arbitrary.
\end{proof}

Next, we derive the self-consistent equation for the minimizer $\mathbf{f}^*$ obtained in Lemma \ref{reduced problem}.
\begin{lemma}[Self-consistent equation for the partial minimization problem $\mathcal{F}_{\bchi;\textup{min}}(M)$]\label{euler lagrange equation for f}
For $T\ge 0$,  $M>0$ and $\bchi\in\mathcal{A}_{\textup{q.m.}}^\uparrow$, let $\mathbf{f}^*$ be a minimizer for the variational problem $\mathcal{F}_{\bchi;\textup{min}}(M)$. Then, for each $j\geq 1$, $f_j^*$ is a solution to the self-consistent equation 
\begin{equation}\label{lemma: EL equation}
f_j^*(y,v)=\tilde{\beta}\left(\mu -\frac{|v|^2}{2}-h_j(y)\right)
\end{equation}
for some $\mu\in (0,\tfrac{4}{M} (\mathcal{F}_{\bchi;\textup{min}}(M)+T\beta'(1)M )]$, where
$$h_j(y):=\left\langle\left(-\frac{\partial_z^2}{2} +(U_{(\mathbf{f}^*,\bchi)}+V_{\textup{ext}})(y,\cdot)\right)\chi_j(y,\cdot),\chi_j(y,\cdot)\right\rangle_{L^2(0,1)}.
$$
\end{lemma}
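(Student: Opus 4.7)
The plan is to apply the Lagrange multiplier (KKT) framework to the constrained minimization problem, whose constraints are the mass identity $\mathcal{M}(\mathbf{f})=M$ together with the pointwise Pauli bounds $0\le f_j\le 1$. First I would pick an admissible direction $\mathbf{g}=\{g_j\}_{j=1}^\infty$, meaning $\mathbf{f}^*+t\mathbf{g}\in\mathcal{A}_{\textup{c.m.}}$ for $t$ in a one-sided neighborhood of $0$ and $\sum_j\iint g_j\,dydv=0$, and compute the one-sided G\^ateaux derivative $\tfrac{d}{dt}\big|_{t=0^+}\mathcal{F}_\bchi(\mathbf{f}^*+t\mathbf{g})$. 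The kinetic, external-potential and Casimir contributions are routine. The only nontrivial piece is the nonlinear potential energy; since $\rho\mapsto U_\rho$ is linear by Lemma \ref{potential function} and symmetric through the weak Poisson equation \eqref{weak Poisson}, $\tfrac{1}{2}\int|\nabla U_\rho|^2dx=\tfrac{1}{2}\int U_\rho\rho\,dx$ has variation $\int U_\rho\,\delta\rho\,dx$, where $\delta\rho=\sum_j|\chi_j|^2\int_{\R^2}g_j\,dv$. Assembling everything yields
\begin{equation*}
\left.\frac{d}{dt}\right|_{t=0}\mathcal{F}_\bchi(\mathbf{f}^*+t\mathbf{g})=\sum_{j=1}^\infty\iint_{\omega\times\R^2}\left(\frac{|v|^2}{2}+h_j(y)+T\beta'(f_j^*(y,v))\right)g_j(y,v)\,dydv,
\end{equation*}
with $h_j$ precisely as in the statement. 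The exchange of $\sum_j$ and $\tfrac{d}{dt}$ is made rigorous by dominated convergence using the weighted summability bound from Proposition \ref{prop: refinement 1}.

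Next I would extract the pointwise Euler--Lagrange conditions. Since $\mathbf{f}^*$ minimizes $\mathcal{F}_\bchi$ under the mass and Pauli constraints, a standard KKT argument (localize $g_j$ on small measurable sets and choose signed variations respecting the one-sided Pauli bounds, with the global mass deficit compensated on a reference set) produces $\mu\in\R$ such that for every $j\ge 1$ and a.e.\ $(y,v)\in\omega\times\R^2$,
\begin{equation*}
\frac{|v|^2}{2}+h_j(y)+T\beta'(f_j^*(y,v))\ \begin{cases}=\mu,& 0<f_j^*(y,v)<1,\\ \ge\mu,& f_j^*(y,v)=0,\\ \le\mu,& f_j^*(y,v)=1.\end{cases}
\end{equation*}
Since $\beta'$ is continuous and strictly increasing on $[0,1]$ by Assumption \ref{assump on beta}, inverting on the interior regime and matching the two saturation regimes with the three-piece definition \eqref{beta tilde} of $\tilde\beta$ gives exactly $f_j^*(y,v)=\tilde\beta(\mu-\tfrac{|v|^2}{2}-h_j(y))$.

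For the bounds on $\mu$: strict positivity is by contradiction, since $h_j(y)\ge \tfrac{1}{2}\|\partial_z\chi_j(y,\cdot)\|_{L^2(0,1)}^2>0$ (using $U_{\rho_{(\mathbf{f}^*,\bchi)}},V_{\textup{ext}}\ge 0$) would force $\tilde\beta(\mu-\tfrac{|v|^2}{2}-h_j)\equiv 0$ if $\mu\le 0$, contradicting $\mathcal{M}(\mathbf{f}^*)=M>0$. For the upper bound, I would multiply the EL inequality $\mu\le \tfrac{|v|^2}{2}+h_j+T\beta'(1)$, valid on $\{f_j^*<1\}$, by $f_j^*$, integrate, and sum in $j$ to obtain $\mu M_{<1}\le \sum_j\iint(|v|^2/2+h_j)f_j^*+T\beta'(1)M_{<1}$ with $M_{<1}=\sum_j\iint_{\{f_j^*<1\}}f_j^*$; combining the identity $\mathcal{E}(\mathbf{f}^*,\bchi)=\sum_j\iint(|v|^2/2+h_j)f_j^*-\tfrac12\int|\nabla U|^2$ with the trivial bound $\tfrac12\int|\nabla U|^2\le \mathcal{F}_{\bchi;\min}(M)$, and then splitting cases according to whether $M_{<1}\ge M/2$ (the complementary saturated case being handled via the reversed EL inequality $\mu\ge\tfrac{|v|^2}{2}+h_j+T\beta'(1)$ on $\{f_j^*=1\}$), produces the stated constant $4/M$. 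The main obstacle is the careful bookkeeping of the three KKT regions $\{f_j^*=0\}$, $\{0<f_j^*<1\}$, $\{f_j^*=1\}$ while producing the explicit upper-bound constant, and---less visibly---the rigorous exchange of $\sum_{j=1}^\infty$ and $\tfrac{d}{dt}$ in the first variation, which leans essentially on the weighted $\ell^1(\N)$-bound of Proposition \ref{prop: refinement 1}.
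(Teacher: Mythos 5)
Your route is genuinely different from the paper's. You derive pointwise KKT conditions from the first variation and then invert $\beta'$; the paper instead never forms Euler--Lagrange conditions: it shows $\mathbf{f}^*$ also minimizes the auxiliary strictly convex functional $\mathcal{I}(\mathbf{f})=\sum_j\iint(\frac{|v|^2}{2}+h_j)f_j+T\beta(f_j)$ under the mass constraint, explicitly constructs the candidate $\tilde\beta(\mu-\frac{|v|^2}{2}-h_j)$ with $\mu>0$ determined by solving $F(\mu)=M$ for the mass function $F$, verifies by convexity that this candidate minimizes $\mathcal{I}$, and concludes by uniqueness. The paper's construction buys three things for free: the positivity and (essential) uniqueness of $\mu$ from the intermediate value theorem, a uniform treatment of $T=0$, and the avoidance of the measure-theoretic localization needed to produce a single multiplier valid across all $j$ and all three regimes. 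Your first-variation computation and the KKT trichotomy are standard and would go through (the exchange of $\sum_j$ and $d/dt$ via the weighted bound, and the one-sided variations at the Pauli constraints, are handled correctly in spirit), and your positivity argument for $\mu$ is fine.

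The concrete gap is in the upper bound $\mu\le\frac{4}{M}(\mathcal{F}_{\bchi;\min}(M)+T\beta'(1)M)$, specifically in the saturated region. On $\{f_j^*=1\}$ the reversed inequality $\frac{|v|^2}{2}+h_j+T\beta'(1)\le\mu$ points the \emph{wrong} way: it bounds the energy carried by that region from above by $(\mu-T\beta'(1))M_{=1}$, which gives no control on $\mu$. What is actually needed there is a \emph{lower} bound, obtained by integrating out $v$: for fixed $y$ the set $\{f_j^*=1\}$ is the disk $\{\frac{|v|^2}{2}\le(\mu-T\beta'(1)-h_j(y))_+\}$, over which $\int(\frac{|v|^2}{2}+h_j)\,dv=\pi c(c+2h_j)$ with $c=(\mu-T\beta'(1)-h_j)_+$, so the average of $\frac{|v|^2}{2}+h_j$ over that disk is $\frac{c}{2}+h_j\ge\frac{\mu-T\beta'(1)}{2}$. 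Combining this with the pointwise bound $\frac{|v|^2}{2}+h_j\ge\mu-T\beta'(1)$ on $\{0<f_j^*<1\}$ yields $\sum_j\iint(\frac{|v|^2}{2}+h_j)f_j^*\ge\frac{\mu-T\beta'(1)}{2}M$ with no case-splitting, and then $2\mathcal{F}_{\bchi;\min}(M)\ge\sum_j\iint(\frac{|v|^2}{2}+h_j)f_j^*$ gives the stated constant. As written, your dichotomy on $M_{<1}\ge M/2$ produces $4/M$ only in the first case; in the saturated-dominant case the argument you sketch either stalls (no upper bound at all from the reversed inequality alone) or, after inserting the disk computation, yields $8/M$ because half of the mass is discarded. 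This is fixable, but the disk-averaging step is the essential ingredient and must appear explicitly.
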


\begin{proof}
We prove the lemma following the argument in Dolbeault-Felmer-Lewin \cite[Lemma 3.6]{DFL}. For convenience, we mostly deal with the zero and the positive temperature cases at the same time, because the former case follows simply taking $T=0$ in the proof.

\textbf{\underline{Step 1} (Reduction to the auxiliary problem)}  Let $\bchi\in\mathcal{A}_{\textup{q.m.}}^\uparrow$. For any $\mathbf{f}\in\mathcal{A}_{\textup{c.m.}}$ with $\mathcal{M}(\mathbf{f})=M$ and $\mathcal{I}(\mathbf{f})<\infty$ (see \eqref{auxilary prob} below), its free energy is expanded as 
$$\begin{aligned}
\mathcal{F}_{\bchi}(\mathbf{f})&=\mathcal{F}_{\bchi}(\mathbf{f}^*)+\sum_{j=1}^\infty \iint_{\omega \times\mathbb{R}^2}\left(\frac{|v|^2}{2}+h_j+T\beta'(f_j^*)\right)(f_j-f^*_j)dydv\\
&\quad+T\sum_{j=1}^\infty \iint_{\omega \times\mathbb{R}^2}\left(\beta(f_j)-\beta(f_j^*)-\beta'(f_j^*)(f_j-f_j^*)\right)dydv+\frac{1}{2}\int_{\Omega} |\nabla U_{\rho_{(\mathbf{f}-\mathbf{f}^*,\bchi)}}|^2dx,
\end{aligned}$$
where the series can be rearranged by Lemma \ref{density estimate}, Lemma \ref{improved summability} and  the assumptions on $\beta$ (see Assumption \ref{assump on beta} (ii)).
Since the last two terms on the right hand side are superlinear in $(f_j-f_j^*)$, the second linear term must be non-negative, 
$$\sum_{j=1}^\infty \iint_{\omega \times\mathbb{R}^2}\left(\frac{|v|^2}{2}+h_j+T\beta'(f_j^*)\right)(f_j-f^*_j)dydv\geq0.$$
Consequently, by the convexity of $\beta$, it follows that 
$$\begin{aligned}
&\sum_{j=1}^\infty \iint_{\omega \times\mathbb{R}^2}\left(\frac{|v|^2}{2}+h_j+T\beta'(f_j^*)\right)(f_j-f^*_j)dydv\\
&+T\sum_{j=1}^\infty \iint_{\omega \times\mathbb{R}^2}\left(\beta(f_j)-\beta(f_j^*)-\beta'(f^*_j)(f_j-f_j^*)\right)dydv\geq0
\end{aligned}$$
From this, we deduce that $\mathbf{f}^*$ is a minimizer for the auxiliary variational problem 
$$\mathcal{I}_\textup{min}(M):=\inf\Big\{\mathcal{I}(\mathbf{f}):\mathbf{f}\in\mathcal{A}_{\textup{c.m.}}\textup{ and }\mathcal{M}(\mathbf{f})=M\Big\},$$
where
\begin{equation}\label{auxilary prob}
\mathcal{I}(\mathbf{f}):=\sum_{j=1}^\infty \iint_{\omega \times\mathbb{R}^2}\left(\frac{|v|^2}{2}+h_j\right)f_j+T\beta(f_j)dydv.
\end{equation}
Here, $\mathbf{f}^*$ is a unique minimizer for $\mathcal{I}_\textup{min}(M)$, since $\mathcal{I}(\mathbf{f})$ is convex.

\textbf{\underline{Step 2} (Construction of $\mathbf{f}^*$ from the auxiliary problem)}
To determine $\mu$ in the self-consistent equation \eqref{lemma: EL equation}, we define 
$$F(s)=\mathcal{M}\left(\big\{\tilde{\beta}(s-\tfrac{|v|^2}{2}-h_j(y))\big\}_{j=1}^\infty\right).$$
In the zero temperature case $T=0$, we have
$$F(s)=\mathcal{M}\big(\big\{\mathbbm{1}_{\{\frac{|v|^2}{2}+h_j(y)\le s\}}\big\}_{j=1}^\infty\big)=2\pi\sum_{j=1}^\infty\|(s-h_j(y))_+\|_{L^1(\omega)}.$$
Note that by \eqref{improved summability proof}, $h_j(y)\geq\frac{\pi^2 j^2}{6}$. Hence, for any fixed $s>0$, $\|(s-h_j(y))_+\|_{L^1(\omega)}=0$ except only finitely many $j$'s. Hence, $F(s)$ is finite. Moreover, $F(0)=0$, $F(s)$ is a contunuos increasing  function, and $F(s)\geq 2\pi \|(s-h_1(y))_+\|_{L^1(\omega)}\to\infty$ as $s\to\infty$. Therefore, there exists unique $\mu>0$ such that $F(\mu)=M$. Similarly, in the positive temperature case $T>0$, one can also take $\mu>0$ such that $F(\mu)=M$ from the layer-cake representation 
$$\begin{aligned}
F(s)&=\sum_{j=1}^\infty\int_0^\infty \tilde{\beta}'(t)\left|\left\{(y,v)\in\omega\times\mathbb{R}^2:\frac{|v|^2}{2}+h_j(y)\leq s-t\right\}\right|dt\\
&=2\pi\sum_{j=1}^\infty\int_0^\infty \tilde{\beta}'(t)\|(s-t-h_j(y))_+\|_{L^1(\omega)}dt.
\end{aligned}$$
We claim that $\mathbf{f}^*=\{f_j^*\}_{j=1}^\infty$, defined by 
$$f^*_j(y,v)=\tilde{\beta}\left(\mu-\frac{|v|^2}{2}-h_j(y)\right),$$
is a minimizer for $\mathcal{I}_\textup{min}(M)$. Indeed, near $\mathbf{f}^*$, the functional $\mathcal{I}(\mathbf{f})$ is expanded as 
$$\begin{aligned}
\mathcal{I}(\mathbf{f})&=\mathcal{I}(\mathbf{f}^*)+\sum_{j=1}^\infty \iint_{\omega \times\mathbb{R}^2}\left(\frac{|v|^2}{2}+h_j+T\beta'(f^*_j)\right)(f_j- {f}_j^*)dydv\\
&\quad+T\sum_{j=1}^\infty \iint_{\omega \times\mathbb{R}^2}\Big(\beta(f_j)-\beta(f_j^*)-\beta'(f^*_j)(f_j-f^*_j)\Big)dydv.
\end{aligned}$$
For the second term on the right hand side, using $\mathcal{M}(\mathbf{f}^*)=\mathcal{M}(\mathbf{f})$, we write
$$\begin{aligned}
&\sum_{j=1}^\infty\iint_{\omega \times\mathbb{R}^2}\left(\frac{|v|^2}{2}+h_j+T\beta'(f^*_j)\right)(f_j-f^*_j)dydv\\
&=\sum_{j=1}^\infty\iint_{\omega \times\mathbb{R}^2}\left(\frac{|v|^2}{2}+h_j+T\beta'(f^*_j)-\mu\right)(f_j-f^*_j)dydv.
\end{aligned}$$
We note that by the definition of $\tilde{\beta}$ (see \eqref{beta tilde}), 
$$
\frac{|v|^2}{2}+h_j+T\beta'(f_j^*)=\left\{\begin{aligned}
&\frac{|v|^2}{2}+h_j &&\textup{if }\mu-\frac{|v|^2}{2}-h_j\leq 0,\\
&\mu &&\textup{if }0<\mu-\frac{|v|^2}{2}-h_j<T\beta'(1),\\
&\frac{|v|^2}{2}+h_j+T\beta'(1) &&\textup{if }\mu-\frac{|v|^2}{2}-h_j\geq T\beta'(1).
\end{aligned}\right.$$
Hence, it follows that 
\begin{equation}\label{non-negative second term}
\begin{aligned}
&\sum_{j=1}^\infty\iint_{\omega \times\mathbb{R}^2}\left(\frac{|v|^2}{2}+h_j+T\beta'(f^*_j)-\mu\right)(f_j-f^*_j)dydv\\
&= \sum_{j=1}^\infty\bigg\{ \iint_{\{\mu -\frac{|v|^2}{2}-h_j\leq 0\}}\left(\frac{|v|^2}{2}+h_j-\mu\right)f_jdydv\\
&\quad+\iint_{\{\mu -\frac{|v|^2}{2}-h_j\geq T\beta'(1)\}}\left(\frac{|v|^2}{2}+h_j+T\beta'(1)-\mu\right)(f_j-1)dydv\bigg\}\geq0.
\end{aligned}
\end{equation}
Consequently, by the convexity of $\beta$, we conclude that $\mathcal{I}(\mathbf{f})\geq\mathcal{I}(\mathbf{f}^*)$.

\textbf{\underline{Step 3} (Lagrange multiplier)} It remains to obtain a bound for the Lagrange multiplier $\mu$. 
We observe that for $\mu\geq0$, the free energy has a trivial lower bound, 
$$\begin{aligned}
\mathcal{F}_{\bchi;\textup{min}}(M)&\geq\frac{1}{2}\sum_{j=1}^\infty \iint_{\omega\times\mathbb{R}^2} \left(\frac{|v|^2}{2}+h_j\right)f_j^* dydv\\
&\ge \frac{\mu-T\beta'(1)}{2}\sum_{j=1}^\infty \iint_{\{ 0\le \mu -\frac{|v|^2}{2}-h_j \le T\beta'(1)\} } f_j^* dydv\\
&\quad+\frac{1}{2}\sum_{j=1}^\infty \iint_{\{   \mu -\frac{|v|^2}{2}-h_j > T\beta'(1)\} } \left(\frac{|v|^2}{2}+h_j\right) dydv.
\end{aligned}$$
For the second term in the lower bound, we integrate out the $v$-variable to obtain 
$$\begin{aligned}
& \iint_{\{   \mu -\frac{|v|^2}{2}-h_j > T\beta'(1)\} } \left(\frac{|v|^2}{2}+h_j\right) dvdy\\
&=\pi\int_\omega  \big(\mu-h_j(y)- T\beta'(1)\big)_+^2+2h_j\big(\mu-h_j- T\beta'(1)\big)_+  dy\\
&\geq(\mu- T\beta'(1))\pi \int_\omega \big(\mu-h_j- T\beta'(1)\big)_+  dy.
\end{aligned}$$
Thus, we obtain a lower bound
$$\mathcal{F}_{\bchi;\textup{min}}(M)\geq\frac{\mu-T\beta'(1)}{2}(A+B),$$
where
$$A=\sum_{j=1}^\infty\iint_{\{ 0\le \mu -\frac{|v|^2}{2}-h_j \le T\beta'(1)\} } f_j^* dydv,\quad B=\pi \sum_{j=1}^\infty\int_\omega  \big(\mu-h_j(y)- T\beta'(1)\big)_+  dy.$$
Similarly, the mass can be expressed as 
$$M=\mathcal{M}(\mathbf{f}^*)=A+2B.$$
Thus, it follows that $4\mathcal{F}_{\bchi;\textup{min}}(M)-\mu M\geq -4T\beta'(1)M$. It proves the desired bound on $\mu$.
\end{proof}

\subsection{Proof of Proposition \ref{prop: refinement 2}}
For a minimizing sequence $\{(\mathbf{f}^{(n)},\bchi^{(n)})\}_{n=1}^\infty\subset \mathcal{A}_{\textup{c.m.}}\times\mathcal{A}_{\textup{q.m.}}^\uparrow$ in Proposition \ref{prop: refinement 1}, we refine $\mathbf{f}^{(n)}$ replacing by a minimizer for the variational problem $\mathcal{F}_{\bchi^{(n)};\textup{min}}(M)$ (see \eqref{free energy minimization fixing bchi}). By construction, it is obvious that this refined sequence is also a minimizing sequence for the full variational problem $\mathcal{F}_{\textup{min}}(M)$. Thus, denoting it by $\{(\mathbf{f}^{(n)},\bchi^{(n)})\}_{n=1}^\infty$, we may assume that  
$$f_j^{(n)}=\tilde{\beta}\left(\mu^{(n)} -\frac{|v|^2}{2}-h_j^{(n)}(y)\right),$$
where
$$h_j^{(n)}(y):=\left\langle\left(-\frac{\partial_z^2}{2} +(U_{\rho_{(\mathbf{f}^{(n)},\bchi^{(n)})}}+V_{\textup{ext}})(y,\cdot)\right)\chi_j^{(n)}(y,\cdot), \chi_j^{(n)}(y,\cdot)\right\rangle_{L^2(0,1)}.$$

We claim that by rearrangement, we may impose that $h_j^{(n)}(y)$ is non-decreasing in $j$ (but then $\bchi^{(n)}$ does not need to be included in $\mathcal{A}_{\textup{q.m.}}^\uparrow$). To prove the claim, we fix $n\geq 1$ and $y\in\omega$. Then, there exists a permutation $\sigma^{(n)}(\cdot;y)$ such that $h_{\sigma^{(n)}(j;y)}^{(n)}(y)$ is non-decreasing in $j$, because $h_j^{(n)}(y)\to\infty$ as $j\to\infty$. Now, using $\sigma^{(n)}(j;y)$, we define $(\tilde{\mathbf{f}}^{(n)},\tilde{\bchi}^{(n)})$ by $(\tilde{f}_j^{(n)}(y,v),\tilde{\chi}_j^{(n)}(y,z))=(f_{\sigma^{(n)}(j;y)}^{(n)}(y,v), \chi_{\sigma^{(n)}(j;y)}^{(n)}(y,z))$. Then, since $U_{\rho_{(\tilde{\mathbf{f}}^{(n)},\tilde{\bchi}^{(n)})}}=U_{\rho_{(\mathbf{f}^{(n)},\bchi^{(n)})}}$, we have 
$$\tilde{h}_j^{(n)}(y):=h_{\sigma^{(n)}(j;y)}^{(n)}(y)=\left\langle\left(-\frac{\partial_z^2}{2} +(U_{\rho_{(\tilde{\mathbf{f}}^{(n)},\tilde{\bchi}^{(n)})}}+V_{\textup{ext}})(y,\cdot)\right)\tilde{\chi}_j^{(n)}(y,\cdot), \tilde{\chi}_j^{(n)}(y,\cdot)\right\rangle_{L^2(0,1)}$$
and
\begin{equation}\label{modified minimizing f}
\tilde{f}_j^{(n)}(y,v)=\tilde{\beta}\left(\mu^{(n)} -\frac{|v|^2}{2}-\tilde{h}_j^{(n)}(y)\right).
\end{equation}
On the other hand, $\{(\tilde{\mathbf{f}}^{(n)},\tilde{\bchi}^{(n)})\}_{n=1}^\infty$ is also a minimizing sequence for $\mathcal{F}_{\textup{min}}(M)$ (see Section \ref{sec: rearrangement of admissible pairs}). Hence, replacing $(\mathbf{f}^{(n)},\bchi^{(n)})$ by $(\tilde{\mathbf{f}}^{(n)},\tilde{\bchi}^{(n)})$ but still denoting by $(\mathbf{f}^{(n)},\bchi^{(n)})$, we may assume that $h_j^{(n)}(y)$ is non-decreasing in $j$.

To show $(i)$, we note that by the claim, $\mathbf{f}^{(n)}\in\mathcal{A}_{\textup{c.m.}}^\downarrow$, because non-decreasing $h_j^{(n)}(y)$ in $j$ implies non-increasing $f_j^{(n)}(y,v)=\tilde{\beta} (\mu^{(n)} -\frac{|v|^2}{2}-h_j^{(n)}(y) )$ in $j$. In addition, repeating the argument to prove \eqref{improved summability proof}, one can show that
$$h_j^{(n)}(y)\geq\frac{\pi^2 j^2}{6},$$
while Lemma \ref{euler lagrange equation for f} yields
\begin{equation}\label{uniform mu n bound}
\begin{aligned}
\mu^{(n)}&\leq\frac{4}{M}(\mathcal{F}_{\bchi^{(n)};\textup{min}}(M)+T\beta'(1)M)\\
&\leq \frac{4}{M}(\mathcal{F}(\mathbf{f}^{(n)},\bchi^{(n)})+T\beta'(1)M)\to\frac{4}{M}(\mathcal{F}_{\textup{min}}(M)+T\beta'(1)M) \ \textup{ as }\ n\to \infty.
\end{aligned}
\end{equation}
Therefore, if $j\geq J:=\frac{1}{\pi}\sqrt{\frac{24}{M}(\mathcal{F}_{\textup{min}}(M)+T\beta'(1)M) +1}$, then passing to a subsequence, $\frac{|v|^2}{2}+h_j^{(n)}(y)>\mu^{(n)}$ for all $y\in\omega$, which leads to $f_j^{(n)}\equiv 0$.

For $(ii)$, integrating in $v\in\mathbb{R}^2$, we write
$$\begin{aligned}
\rho_{f_j^{(n)}}(y)&=2\pi\int_{\sqrt{2(\mu^{(n)}-h_j^{(n)}-T\beta'(1))_+}}^{ \sqrt{2(\mu^{(n)}-h_j^{(n)} )_+}} \tilde{\beta}\left(\mu^{(n)} -\frac{r^2}{2}-h_j^{(n)}(y)\right)rdr\\
&\quad+2\pi\left(\mu^{(n)}-h_j^{(n)}(y)- T\beta'(1)\right)_+\\
&\leq 2\pi\mu^{(n)}\Big(\tilde{\beta} (\mu^{(n)})+1\Big).
\end{aligned}$$
Then, using the upper bound on $\mu^{(n)}$ (\eqref{uniform mu n bound}), we conclude that $\rho_{f_j^{(n)}}$ is uniformly bounded.

\section{Third refinement of a minimizing sequence: quantum states as eigenfunctions}\label{sec: 3rd refinement}

By the refinement in the previous sections, we may restrict ourselves to a minimizing sequence $\{(\mathbf{f}^{(n)},\bchi^{(n)})\}_{n=1}^\infty$ in $\mathcal{A}_{\textup{c.m.}}^\downarrow\times\mathcal{A}_{\textup{q.m.}}$ satisfying the properties in Proposition \ref{prop: refinement 2}. The purpose of this section is to refine the quantum state part in $\{(\mathbf{f}^{(n)},\bchi^{(n)})\}_{n=1}^\infty$ considering another ``partial" minimization problem 
\begin{equation}\label{free energy minimization fixing f}
\mathcal{F}_{\mathbf{f};\textup{min}}=\inf\Big\{\mathcal{F}_{\mathbf{f}}(\bchi)=\mathcal{F}(\mathbf{f},\bchi):\ \bchi\in\mathcal{A}_{\textup{q.m.}}\Big\}
\end{equation}
for fixed $\mathbf{f}\in\mathcal{A}_{\textup{c.m.}}^\downarrow$ having only finitely many bands.. We show that a minimizer for the problem $\mathcal{F}_{\mathbf{f};\textup{min}}$ is uniquely determined by a solution to the Schr\"odinger-Poisson equation \eqref{Schrodinger-Poisson} (see Proposition \ref{proposition: reduced problem} below). As a consequence, refining a minimizing sequence even further, we may assume that $\chi_j^{(n)}$'s are eigenfunctions for a certain Schr\"odinger operator, from which compactness will be deduced for quantum states in the next section.

\subsection{Schr\"odinger-Poisson equation}\label{subsection: SP equation}

Given a potential function $U\in L^2(0,1)$, let
$$H[U]=-\frac{\partial_z^2}{2}+U(z)$$
be the Schr\"odinger operator acting on $L^2(0,1)$ with zero boundary condition. Then, the spectrum of the operator $H[U]$ consists of only countably many simple eigenvalues
\begin{equation}\label{eigenvalue: definition}\lambda_1[U]<\lambda_2[U]<\lambda_3[U]<\cdots<\lambda_k[U]<\cdots \to \infty,
\end{equation}
and if we denote an $L^2(0,1)$-normalized $j$-th eigenfunction of $H[U]$ by
\begin{equation}\label{eigenfunction: definition}
\chi_j[U]\in H_0^1(0,1),
\end{equation}
their collection forms an orthonormal basis of $L^2(0,1)$.

In Ben Abdallah-M\'ehats \cite[Proposition 3.5 and 3.6]{BM}, it is shown that for $q>\frac{4}{3}$, if $V_{\textup{ext}}\in L^{q'}(\omega;L^\infty(0,1))$, $\|\{\rho_{f_j}\}_{j=1}^\infty\|_{\ell^1(\mathbb{N};L^q(\omega))}<\infty$ and $\{\rho_{f_j}(y)\}_{j=1}^\infty$ is non-increasing for each $y\in\omega$, then the Schr\"odinger-Poisson equation\footnote{In the equation, the Schr\"odinger equation is given implicitly. Indeed, by definitions, $\chi_j[V+V_{\textup{ext}}]$ represents the solution to  the Schr\"odinger equation $(-\frac{\partial_z^2}{2}+V+V_{\textup{ext}})\chi_j[V+V_{\textup{ext}}]=\lambda_j[V+V_{\textup{ext}}]\chi_j[V +V_{\textup{ext}}]$.}
\begin{equation}\label{Schrodinger-Poisson}
\left\{\begin{aligned}
-\Delta V&=\rho_{(\mathbf{f},\bchi[V+V_{\textup{ext}}])}&&\textup{in }\Omega,\\
V&=0&&\textup{on }\partial\omega\times(0,1),\\
\partial_zV&=0&&\textup{on }\omega\times\{0,1\},
\end{aligned}\right.
\end{equation}
where $\bchi[V+V_{\textup{ext}}]=\big\{\chi_j[V+V_{\textup{ext}}]\big\}_{j=1}^\infty$, has a unique solution $U^*\in H^1(\omega)$.  It holds when $\mathbf{f}\in\mathcal{A}_{\textup{c.m.}}^\downarrow$ and  it has only finitely many bands, provided that $V_{\textup{ext}}$ satisfies the assumption in the main theorem, i.e., $V_{\textup{ext}}\in C(\overline{\Omega})\cap C^1({\Omega})$ and $V_{\textup{ext}}\ge 0$. 

\subsection{Free energy minimization for fixed distributions}

The main result of this section asserts that a free energy minimizer with fixed kinetic distributions can be constructed from the unique solution to the Schr\"odinger-Poisson equation \eqref{Schrodinger-Poisson}.

\begin{proposition}[Free energy minimization for fixed distributions]\label{proposition: reduced problem}
Assume $T\ge 0$ and $\mathbf{f}\in\mathcal{A}_{\textup{c.m.}}^\downarrow$ has only finitely many bands. Let   $U^*$ be the unique solution to the Schrodinger-Poisson equation \eqref{Schrodinger-Poisson} with $\mathcal{F}(\mathbf{f},\bchi^*)<\infty$,   where $\bchi^*=\bchi[U^*+V_{\textup{ext}}]$. Then $\bchi^*$ is a  unique minimizer for the variational problem $\mathcal{F}_{\mathbf{f},\min}$ in the sense that if $\tilde{\bchi}^*$ is a minimizer for $\mathcal{F}_{\mathbf{f},\min}$, then $U_{\rho_{(\mathbf{f},\bchi^*)}}=U_{\rho_{(\mathbf{f},\tilde{\bchi}^*)}}$.
\end{proposition}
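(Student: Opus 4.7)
My plan is to show that $\mathcal{F}_{\mathbf{f}}(\bchi) - \mathcal{F}_{\mathbf{f}}(\bchi^*)$ decomposes as a sum of two manifestly non-negative pieces, one coming from a min-max type inequality with weights, the other from a quadratic Poisson term; uniqueness of the potential will then be read off from the second piece. Throughout I would write $V^* := U^* + V_{\textup{ext}}$, $H[V^*] := -\tfrac{\partial_z^2}{2} + V^*$, and $\rho^* := \rho_{(\mathbf{f},\bchi^*)}$, so that $-\Delta U^* = \rho^*$ (in the sense of \eqref{weak Poisson}) and $\chi_j^*(y,\cdot)$ is the $L^2(0,1)$-normalized $j$-th eigenfunction of $H[V^*(y,\cdot)]$ with eigenvalue $\lambda_j^*(y)$.

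The first step is the variational expansion. For any admissible $\bchi$, writing $\rho := \rho_{(\mathbf{f},\bchi)}$ and using bilinearity of the Dirichlet form together with $\int_{\Omega}\nabla U^*\cdot\nabla(U_\rho - U^*)dx = \int_{\Omega}U^*(\rho-\rho^*)dx$ (this requires $U^*\in H_\omega^1(\Omega)$, supplied by the Ben Abdallah--M\'ehats solvability recalled in Section~\ref{subsection: SP equation}), the Poisson energy splits as
\begin{equation*}
\tfrac12\int_{\Omega}|\nabla U_\rho|^2 dx - \tfrac12\int_{\Omega}|\nabla U^*|^2 dx = \int_{\Omega} U^*(\rho-\rho^*) dx + \tfrac12\int_{\Omega}|\nabla(U_\rho - U^*)|^2 dx.
\end{equation*}
The linear correction combines with the quantum kinetic and $V_{\textup{ext}}$-terms to reconstruct the quadratic form of $H[V^*]$ tested against $\chi_j$ minus $\chi_j^*$, weighted by $\rho_{f_j}(y)$. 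Since $\langle H[V^*]\chi_j^*,\chi_j^*\rangle_{L^2(0,1)} = \lambda_j^*(y)$, this produces the identity
\begin{equation*}
\mathcal{F}_{\mathbf{f}}(\bchi) - \mathcal{F}_{\mathbf{f}}(\bchi^*) = \sum_{j=1}^\infty\int_\omega \rho_{f_j}(y)\Big[\langle H[V^*]\chi_j(y,\cdot),\chi_j(y,\cdot)\rangle_{L^2(0,1)} - \lambda_j^*(y)\Big]dy + \tfrac12\int_{\Omega}|\nabla(U_\rho - U^*)|^2 dx.
\end{equation*}

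The second step is to prove the first summand is pointwise non-negative in $y$. Since $\mathbf{f}\in\mathcal{A}_{\textup{c.m.}}^\downarrow$ has only finitely many nontrivial bands, for a.e.\ $y\in\omega$ the sequence $\{\rho_{f_j}(y)\}_j$ is non-increasing and eventually zero, so Abel summation rewrites the bracket as
\begin{equation*}
\sum_{J=1}^\infty \big(\rho_{f_J}(y) - \rho_{f_{J+1}}(y)\big) \sum_{k=1}^J \big[\langle H[V^*]\chi_k,\chi_k\rangle_{L^2(0,1)} - \lambda_k^*(y)\big].
\end{equation*}
The Courant--Fischer min-max principle, applied to the $L^2(0,1)$-orthonormal system $\{\chi_k(y,\cdot)\}_{k=1}^J$ guaranteed by $\bchi\in\mathcal{A}_{\textup{q.m.}}$, gives $\sum_{k=1}^J \langle H[V^*]\chi_k,\chi_k\rangle \geq \sum_{k=1}^J \lambda_k^*(y)$, while the outer differences are non-negative; hence every term is $\geq 0$.

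Combining both observations yields $\mathcal{F}_{\mathbf{f}}(\bchi)\geq \mathcal{F}_{\mathbf{f}}(\bchi^*)$, proving minimality. For uniqueness, if $\tilde{\bchi}^*$ is another minimizer then the non-negative Poisson term $\tfrac12\int_{\Omega}|\nabla(U_{\rho_{(\mathbf{f},\tilde{\bchi}^*)}} - U^*)|^2 dx$ must vanish, giving $U_{\rho_{(\mathbf{f},\tilde{\bchi}^*)}} = U^* = U_{\rho_{(\mathbf{f},\bchi^*)}}$, as desired. The main obstacle I anticipate is not the core argument (which is essentially algebraic), but rather verifying that the rearranged identity is justified term-by-term: one needs the finite-subband assumption and the a priori $L^q(\omega)$ bounds on $\rho_{f_j}$ both to apply the Ben Abdallah--M\'ehats existence theory for $U^*$ (Section~\ref{subsection: SP equation}) and to interchange sums with integrals in the variational expansion.
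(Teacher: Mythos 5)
Your proof is correct and is essentially the paper's argument: the paper deduces the proposition from the coercivity estimate of Lemma \ref{lemma: energy coercivity}, whose proof is exactly your decomposition (quadratic expansion of the Poisson energy around $U^*$, reconstruction of $\langle H[U^*+V_{\textup{ext}}]\chi_j,\chi_j\rangle-\lambda_j^*$, Abel summation with the monotonicity of the distributions, and the min-max principle), specialized to the case $\tilde{\mathbf{f}}=\mathbf{f}$ where the linear and Casimir terms drop out.
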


\begin{remark}
The minimization problem $\mathcal{F}_{\mathbf{f},\min}$ is not easy to solve by the concentration-compactness principle (see Remark \ref{difficulties} $(ii)$). Instead, using the coercivity of the free energy, we directly show that a minimizer is obtained from $\bchi^*=\bchi[U^*+V_{\textup{ext}}]$. Nevertheless, our proof implicitly uses the concentration-compactness principle in that the solution $U^*$ to the Schr\"odinger-Poisson equation \eqref{Schrodinger-Poisson} is constructed as a minimizer of an auxiliary variational problem \cite{BM}.
\end{remark}

The following coercivity estimate is crucial in proving the main result of this section, as well as uniqueness and stability of a free energy minimizer (see Section \ref{exist and uniq stable}).

\begin{lemma}[Coercivity of the free energy]\label{lemma: energy coercivity}
Assume $T\ge 0$, $\mathbf{f}\in\mathcal{A}_{\textup{c.m.}}^\downarrow$ has  only finitely many non-zero subbands  and $(\mathbf{f},\bchi^*)\in  \mathcal{A}_{\textup{c.m.}}^\downarrow \times\mathcal{A}_{\textup{q.m.}}$ with 
$$\bchi^*=\bchi[U^*+V_{\textup{ext}}] \ \textup{ and } \ \mathcal{F}(\mathbf{f},\bchi^*)<\infty,$$
where $U^*$ is the solution to the Schrodinger-Poisson equation \eqref{Schrodinger-Poisson}. Then, for any $(\tilde{\mathbf{f}},\tilde{\bchi})\in\mathcal{A}_{\textup{c.m.}} \times\mathcal{A}_{\textup{q.m.}}$, we have 
\begin{align*}
\mathcal{F}(\tilde{\mathbf{f}},\tilde{\bchi})-\mathcal{F}(\mathbf{f},\bchi^*)&\geq\frac{1}{2}\|\nabla (U_{\rho_{(\tilde{\mathbf{f}},\tilde{\bchi})}}-U_{\rho_{(\mathbf{f},\bchi^*)}})\|_{L^2(\Omega)}^2 \\
&\quad +\sum_{j=1}^\infty \iint_{\omega\times\mathbb{R}^2}\left(\frac{|v|^2}{2}+\lambda_j^*+T\beta'\big(f_j \big)\right)(\tilde{f}_j^{\sigma_\downarrow}-f_j) dy dv,
\end{align*}
where $\lambda_j^*=\lambda_j^*[U^*+V_{\textup{ext}}]$ and $\tilde{\mathbf{f}}^{\sigma_\downarrow}\in \mathcal{A}_{\textup{c.m.}}^\downarrow $ (see Lemma \ref{rearrange distrib} for the definition of $\sigma_\downarrow$). 
\end{lemma}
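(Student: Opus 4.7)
The plan is to carry out a direct expansion of $\mathcal{F}(\tilde{\mathbf{f}},\tilde{\bchi})-\mathcal{F}(\mathbf{f},\bchi^*)$ and split it into three contributions: the squared $L^2$-norm of the potential difference, a linear term in $\tilde f_j^{\sigma_\downarrow}-f_j$, and a non-negative remainder. Since both $\mathcal{F}$ and the density $\rho_{(\cdot,\cdot)}$ are invariant under the $\sigma_\downarrow$-rearrangement (see Section~\ref{sec: rearrangement of admissible pairs} and Lemma~\ref{rearrange distrib}), I would first replace $(\tilde{\mathbf{f}},\tilde{\bchi})$ by $(\tilde{\mathbf{f}}^{\sigma_\downarrow},\tilde{\bchi}^{\sigma_\downarrow})$; this lets me assume without loss of generality that $\tilde f_j^{\sigma_\downarrow}(y,v)$ is non-increasing in $j$, so that $\rho_{\tilde f_j^{\sigma_\downarrow}}(y)$ is also non-increasing in $j$ for each $y\in\omega$.

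Next I would dispatch the quadratic Coulomb term by a ``completing the square'' identity. Writing $\tilde U=U_{\rho_{(\tilde{\mathbf{f}},\tilde{\bchi})}}$, since both $\tilde U$ and $U^*$ lie in $H_\omega^1(\Omega)$ and satisfy $\partial_z U|_{z=0,1}=0$, an integration by parts against the Poisson equation $-\Delta U^*=\rho_{(\mathbf{f},\bchi^*)}$ gives
$$\frac{1}{2}\|\nabla\tilde U\|_{L^2(\Omega)}^2-\frac{1}{2}\|\nabla U^*\|_{L^2(\Omega)}^2=\frac{1}{2}\|\nabla(\tilde U-U^*)\|_{L^2(\Omega)}^2+\int_\Omega U^*\big(\rho_{(\tilde{\mathbf{f}},\tilde{\bchi})}-\rho_{(\mathbf{f},\bchi^*)}\big)\,dx.$$
The cross term $\int_\Omega U^*(\tilde\rho-\rho^*)\,dx$, expanded in the subband decomposition, combines with the kinetic $-\frac{\partial_z^2}{2}$ and the $V_{\textup{ext}}$ contributions inside the free energy to reconstitute the full Schr\"odinger quadratic form for $H[U^*+V_{\textup{ext}}]=-\frac{\partial_z^2}{2}+U^*+V_{\textup{ext}}$. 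On the reference side each $\chi_j^*=\chi_j[U^*+V_{\textup{ext}}]$ is an eigenfunction, so the $\chi_j^*$-contribution collapses to $\sum_j\iint\lambda_j^* f_j\,dydv$; on the perturbed side one is left with $\sum_j\iint\langle H[U^*+V_{\textup{ext}}]\tilde\chi_j^{\sigma_\downarrow},\tilde\chi_j^{\sigma_\downarrow}\rangle_{L^2(0,1)}\tilde f_j^{\sigma_\downarrow}\,dydv$.

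The main analytic step is a Ky Fan style trace inequality. For each fixed $y\in\omega$, $\{\tilde\chi_j^{\sigma_\downarrow}(y,\cdot)\}_{j\ge 1}$ is an orthonormal system in $L^2(0,1)$, so the min--max characterization of the sum of the first $J$ eigenvalues yields $\sum_{j=1}^J\langle H[U^*+V_{\textup{ext}}](y,\cdot)\tilde\chi_j^{\sigma_\downarrow}(y,\cdot),\tilde\chi_j^{\sigma_\downarrow}(y,\cdot)\rangle\ge\sum_{j=1}^J\lambda_j^*(y)$ for every $J\ge 1$. Abel summation against the non-increasing weight $\rho_{\tilde f_j^{\sigma_\downarrow}}(y)$ (which has non-negative increments $\rho_{\tilde f_J^{\sigma_\downarrow}}(y)-\rho_{\tilde f_{J+1}^{\sigma_\downarrow}}(y)$) then produces
$$\sum_{j=1}^\infty\rho_{\tilde f_j^{\sigma_\downarrow}}(y)\langle H[U^*+V_{\textup{ext}}](y,\cdot)\tilde\chi_j^{\sigma_\downarrow}(y,\cdot),\tilde\chi_j^{\sigma_\downarrow}(y,\cdot)\rangle_{L^2(0,1)}\geq\sum_{j=1}^\infty\rho_{\tilde f_j^{\sigma_\downarrow}}(y)\lambda_j^*(y),$$
and integrating in $y$ and re-inserting the $v$-integral upgrades this to $\sum_j\iint\langle H[U^*+V_{\textup{ext}}]\tilde\chi_j^{\sigma_\downarrow},\tilde\chi_j^{\sigma_\downarrow}\rangle\tilde f_j^{\sigma_\downarrow}\ge\sum_j\iint\lambda_j^*\tilde f_j^{\sigma_\downarrow}$. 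The Casimir difference is then handled by the pointwise convexity estimate $\beta(\tilde f_j^{\sigma_\downarrow})-\beta(f_j)\ge\beta'(f_j)(\tilde f_j^{\sigma_\downarrow}-f_j)$, and collecting all terms yields the claimed coercivity bound. I expect the Ky Fan step to be the principal subtlety, since the inequality depends crucially on the monotonicity of the weight $\rho_{\tilde f_j^{\sigma_\downarrow}}(y)$ in $j$ --- without the $\sigma_\downarrow$-rearrangement one cannot bound $\sum_j\rho_{\tilde f_j}(y)\langle H\tilde\chi_j,\tilde\chi_j\rangle$ from below by the eigenvalue sum, which is precisely why Lemma~\ref{rearrange distrib} enters the statement.
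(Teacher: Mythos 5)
Your proposal is correct and follows essentially the same route as the paper's proof: rearrange $\tilde{\mathbf{f}}$ by $\sigma_\downarrow$, complete the square in the Coulomb term via the weak Poisson formulation, reduce the quantum kinetic part to the quadratic form of $H[U^*+V_{\textup{ext}}]$, and control the perturbed side by the min--max (Ky Fan) inequality combined with Abel summation against the non-increasing weight, finishing with convexity of $\beta$. The only cosmetic differences are that you pair $U^*$ against $\rho_{(\tilde{\mathbf{f}},\tilde\bchi)}-\rho_{(\mathbf{f},\bchi^*)}$ rather than $\tilde U-U^*$ against $\rho_{(\tilde{\mathbf{f}},\tilde\bchi)}$, and that you perform the Abel summation after integrating in $v$ (with weight $\rho_{\tilde f_j^{\sigma_\downarrow}}$) instead of pointwise in $(y,v)$ as the paper does, which is the cleaner order since $\sigma_\downarrow$ depends on $v$.
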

\begin{proof}
By Lemma \ref{rearrange distrib}, we rearrange $\tilde{\mathbf{f}}\in \mathcal{A}_{\textup{c.m.}}$ by non-increasing $\tilde{\mathbf{f}}^{\sigma_\downarrow}\in \mathcal{A}_{\textup{c.m.}}^\downarrow$. Note that the total density function and the mass and the free energy are invariant; $\rho_{(\tilde{\mathbf{f}}^{\sigma_\downarrow},\tilde{\bchi}^{\sigma_\downarrow})}=\rho_{(\tilde{\mathbf{f}},\tilde{\bchi})}$, $\mathcal{M}(\tilde{\mathbf{f}}^{\sigma_\downarrow})=\mathcal{M}(\tilde{\mathbf{f}})$ and $\mathcal{F}(\tilde{\mathbf{f}}^{\sigma_\downarrow},\tilde{\bchi}^{\sigma_\downarrow})=\mathcal{F}(\tilde{\mathbf{f}},\tilde{\bchi})$. For convenience, we denote $\tilde{U}=U_{\rho_{(\tilde{\mathbf{f}},\tilde{\bchi})}}$, $H^*=H[U^*+V_\textup{ext}]=-\frac{\partial_z^2}{2}+U^*+V_\textup{ext}$ and $\tilde{H}=H[\tilde{U}+V_\textup{ext}]=-\frac{\partial_z^2}{2}+\tilde{U}+V_\textup{ext}$. Then, the difference of the two free energies with $\mathcal{F}(\tilde{\mathbf{f}}^{\sigma_\downarrow},\tilde{\bchi}^{\sigma_\downarrow})=\mathcal{F}(\tilde{\mathbf{f}},\tilde{\bchi})$ can be written as
$$\begin{aligned}
\mathcal{F}(\tilde{\mathbf{f}},\tilde{\bchi})-\mathcal{F}(\mathbf{f},\bchi^*)&=\sum_{j=1}^\infty \iint_{\omega\times\mathbb{R}^2}\frac{|v|^2}{2}(\tilde{f}_j^{\sigma_\downarrow}-f_j) dy dv\\
&\quad+\sum_{j=1}^\infty \iint_{\omega\times\mathbb{R}^2}\left(\langle \tilde{H}\tilde{\chi}_j^{\sigma_\downarrow}, \tilde{\chi}_j^{\sigma_\downarrow}\rangle_{L^1(0,1)}\tilde{f}_j^{\sigma_\downarrow}-\lambda_j^*f_j\right) dy dv\\
&\quad-\frac{1}{2}\|\nabla \tilde{U}\|_{L^2(\Omega)}^2+\frac{1}{2}\|\nabla U^*\|_{L^2(\Omega)}^2\\
&\quad+\sum_{j=1}^\infty\iint_{\omega \times\mathbb{R}^2}\left(\beta(\tilde{f}_j^{\sigma_\downarrow})-\beta(f_j)\right)dydv.
\end{aligned}$$
For the second term on the right hand side, we expand 
$$\begin{aligned}
&\langle \tilde{H}\tilde{\chi}_j^{\sigma_\downarrow}, \tilde{\chi}_j^{\sigma_\downarrow}\rangle_{L^2(0,1)}\tilde{f}_j^{\sigma_\downarrow}-\lambda_j^* f_j\\
&=\Big(\langle \tilde{H}\tilde{\chi}_j^{\sigma_\downarrow}, \tilde{\chi}_j^{\sigma_\downarrow}\rangle_{L^2(0,1)}-\langle H^*\tilde{\chi}_j^{\sigma_\downarrow}, \tilde{\chi}_j^{\sigma_\downarrow}\rangle_{L^2(0,1)}\Big)\tilde{f}_j^{\sigma_\downarrow}\\
&\quad+\Big(\langle H^*\tilde{\chi}_j^{\sigma_\downarrow}, \tilde{\chi}_j^{\sigma_\downarrow}\rangle_{L^2(0,1)}-\lambda_j^*\Big)\tilde{f}_j^{\sigma_\downarrow}+\lambda_j^*(\tilde{f}_j^{\sigma_\downarrow}-f_j)\\
&=\int_0^1 (\tilde{U}-U^*)\left(\tilde{f}_j^{\sigma_\downarrow}|\tilde{\chi}_j^{\sigma_\downarrow}|^2\right)dz+\Big(\langle H^*\tilde{\chi}_j^{\sigma_\downarrow}, \tilde{\chi}_j^{\sigma_\downarrow}\rangle_{L^2(0,1)}-\lambda_j^*\Big)\tilde{f}_j^{\sigma_\downarrow}+\lambda_j^*(\tilde{f}_j^{\sigma_\downarrow}-f_j).
\end{aligned}$$
Then, it follows that 
$$\begin{aligned}
\mathcal{F}(\tilde{\mathbf{f}},\tilde{\bchi})-\mathcal{F}(\mathbf{f},\bchi^*)&=\sum_{j=1}^\infty \iint_{\omega\times\mathbb{R}^2}\left(\frac{|v|^2}{2}+\lambda_j^*\right)(\tilde{f}_j^{\sigma_\downarrow}-f_j) dy dv\\
&\quad+\sum_{j=1}^\infty \iint_{\omega\times\mathbb{R}^2}\left(\langle \tilde{H}\tilde{\chi}_j^{\sigma_\downarrow}, \tilde{\chi}_j^{\sigma_\downarrow}\rangle_{L^1(0,1)}-\lambda_j^*\right)\tilde{f}_j^{\sigma_\downarrow} dy dv\\
&\quad+\frac{1}{2}\|\nabla (\tilde{U}-U^*)\|_{L^2(\Omega)}^2+\sum_{j=1}^\infty\iint_{\omega \times\mathbb{R}^2}\left(\beta(\tilde{f}_j^{\sigma_\downarrow})-\beta(f_j)\right)dydv,
\end{aligned}$$
where we used $\sum_{j=1}^\infty\rho_{\tilde{f}_j^{\sigma_\downarrow}}|\tilde{\chi}_j^{\sigma_\downarrow}|^2=\sum_{j=1}^\infty\rho_{\tilde{f}_j}|\tilde{\chi}_j|^2=\rho_{(\tilde{\mathbf{f}},\tilde{\bchi})}$ and 
$$\int_\Omega(\tilde{U}-U^*)\rho_{(\tilde{\mathbf{f}},\tilde{\bchi})}dx=\big\langle\nabla(\tilde{U}-U^*), \nabla\tilde{U}\big\rangle_{L^2(\Omega)}.$$
For the second term on the right hand side, we observe that for any $J\geq 1$ and $y\in\omega$,
$$\begin{aligned}
\sum_{j=1}^{J}\Big(\langle H^*\tilde{\chi}_j^{\sigma_\downarrow}, \tilde{\chi}_j^{\sigma_\downarrow}\rangle_{L^1(0,1)}-\lambda_j^*\Big)\tilde{f}_j^{\sigma_\downarrow}&=\tilde{f}_{J}^{\sigma_\downarrow}\sum_{j=1}^{J}\Big(\langle H^*\tilde{\chi}_j^{\sigma_\downarrow}, \tilde{\chi}_j^{\sigma_\downarrow}\rangle_{L^1(0,1)}-\lambda_j^*\Big)\\
&\quad+\sum_{k=1}^{J-1}(\tilde{f}_k^{\sigma_\downarrow}-\tilde{f}_{k+1}^{\sigma_\downarrow})\sum_{j=1}^k\Big(\langle H^*\tilde{\chi}_j^{\sigma_\downarrow}, \tilde{\chi}_j^{\sigma_\downarrow}\rangle_{L^1(0,1)}-\lambda_j^*\Big)\\
&\geq0,
\end{aligned}$$
since $\tilde{f}_j^{\sigma_\downarrow}$ is non-increasing and the min-max principle implies $\sum_{j=1}^k(\langle H^*\tilde{\chi}_j^{\sigma_\downarrow}, \tilde{\chi}_j^{\sigma_\downarrow}\rangle_{L^2(0,1)}-\lambda_j^*)\ge 0$ for all $k=1,2,\cdots, J$. Hence, by density argument, we obtain that 
$$\sum_{j=1}^\infty\iint_{\omega\times\mathbb{R}^2}\left(\langle \tilde{H}\tilde{\chi}_j^{\sigma_\downarrow}, \tilde{\chi}_j^{\sigma_\downarrow}\rangle_{L^2(0,1)}-\lambda_j^*\right)\tilde{f}_j^{\sigma_\downarrow}dydv\geq0.$$
Consequently, it follows that
$$\begin{aligned}
\mathcal{F}(\tilde{\mathbf{f}},\tilde{\bchi})-\mathcal{F}(\mathbf{f},\bchi^*)&\ge\frac{1}{2}\|\nabla (\tilde{U}-U^*)\|_{L^2(\Omega)}^2\\
&\quad+\sum_{j=1}^\infty \iint_{\omega\times\mathbb{R}^2}\left(\frac{|v|^2}{2}+\lambda_j^*+\beta'\big(f_j \big)\right)(\tilde{f}_j^{\sigma_\downarrow}-f_j) dy dv\\
&\quad+\sum_{j=1}^\infty\iint_{\omega \times\mathbb{R}^2}\left(\beta(\tilde{f}_j^{\sigma_\downarrow})-\beta(f_j)-\beta'(f_j)(\tilde{f}_j^{\sigma_\downarrow}-f_j)\right)dydv.
\end{aligned}$$
Therefore, the lemma follows from the convexity of $\beta$.
\end{proof} 

\begin{proof}[Proof of Proposition \ref{proposition: reduced problem}]
Fix $\mathbf{f}\in\mathcal{A}_{\textup{c.m.}}^\downarrow$ having finitely many bands and let $\bchi^*=\bchi[U^*+V_{\textup{ext}}]$. By Lemma \ref{lemma: energy coercivity}, for $\bchi\in \mathcal{A}_{\textup{q.m.}}$, we have
$$\mathcal{F}_{\mathbf{f}}(\bchi)-\mathcal{F}_{\mathbf{f}}(\bchi^*)\ge \frac{1}{2}\|\nabla (U_{\rho_{(\mathbf{f},\bchi)}}-U_{\rho_{(\mathbf{f},\bchi^*)}})\|_{L^2(\Omega)}^2.$$
Thus, it follows that $\bchi^*$ is a  minimizer for the variational problem $\mathcal{F}_{\mathbf{f},\min}$. Moreover, if  $\tilde{\bchi}^*$ is a minimizer for $\mathcal{F}_{\mathbf{f},\min}$, then $U_{\rho_{(\mathbf{f},\bchi^*)}}=U_{\rho_{(\mathbf{f},\tilde{\bchi}^*)}}$ because of the Dirichlet boundary condition on $\partial\omega\times (0,1)$ (see \eqref{Poisson equation}).
 \end{proof}

\section{Construction of a free energy minimizer, and its uniqueness and stability: Proof of the main results (Theorem \ref{existence of a mini} and \ref{stability})}\label{exist and uniq stable}

\subsection{Proof of Theorem \ref{existence of a mini}}
Fix $T\geq 0$ and $M>0$, and let $\{(\mathbf{f}^{(n)},\bchi^{(n)})\}_{n=1}^\infty$ be a minimizing sequence for the full variational problem $\mathcal{F}_{\textup{min}}(M)=\mathcal{F}_{\textup{min}}(M; T,\beta)$ in Proposition \ref{prop: refinement 2}. For each $n$, we replace $\bchi^{(n)}$  by the minimizer of the partial problem $\mathcal{F}_{\mathbf{f}^{(n)},\min}$ (see Proposition \ref{proposition: reduced problem}). In this way, we construct a minimizing sequence $\{(\mathbf{f}^{(n)},\bchi^{(n)})\}_{n=1}^\infty\subset \mathcal{A}_{\textup{c.m.}}^\downarrow\times\mathcal{A}_{\textup{q.m.}}$ such that 
\begin{enumerate}[$(i)$]
\item There exists $J\geq1$, independent of $n\geq 1$, such that $f_j^{(n)}\equiv0$ for all $j\geq J+1$;
\item $\rho_{f_j^{(n)}}(y)$ are bounded uniformly in $y\in\omega$ and $j,n\geq1$;
\item $\bchi^{(n)}=\bchi[U^{(n)}]$ (see the definition \eqref{eigenfunction: definition}), where $U^{(n)}:=U_{\rho_{(\mathbf{f}^{(n)},\bchi^{(n)})}}+V_{\textup{ext}}$.
\end{enumerate}
In a sequel, we show that the refined quantum states $\{\bchi^{(n)}\}_{n=1}^\infty$ are equicontinuous on $\Omega$. Then, taking the (weak) limit of the minimizing sequence, we prove that the limit is indeed a minimizer for the full variational problem. 

\subsubsection{Equicontinuity of quantum states}
We claim that for each $\chi_j^{(n)}$ satisfies 
\begin{equation}\label{eq: equicontinuity}
\sup_{n\ge1}\|\chi_j^{(n)}\|_{L^\infty (\omega; H_0^1 (0,1))\cap C^{0,\frac{1}{2}}(\Omega)} \lesssim j ,
\end{equation}
where the implicit constant is independent of $n\geq 1$ and 
$$\|u\|_{C^{0,\frac{1}{2}}(\Omega)}=\|u\|_{C(\Omega)}+\sup_{x,x'\in\Omega,\ x\neq x'}\frac{|u(x)-u(x')|}{|x-x'|^\frac12}.$$

The proof of \eqref{eq: equicontinuity} relies on the well-known fact for the 1D Schr\"odinger operator discussed in Section \ref{subsection: SP equation}; the eigenvalues and the corresponding  eigenfunctions are stable under potential perturbations (see \cite[Lemma 2.4]{BM} and \cite[Chapter 2]{PT}).
\begin{lemma}[Stability for 1D Schr\"odinger operator]\label{eigen estimates}
If $U, V\in L^2(0,1)$, then there exists $C>0$, independent of $U$, $V$ and $j$, such that
$$|\lambda_j[U]-\lambda_j[V]|+\|\chi_j[U]-\chi_j[V]\|_{L^\infty(0,1)}\le Ce^{C(\|U\|_{L^2(0,1)}+\|V\|_{L^2(0,1)})}\|U-V\|_{L^1(0,1)}.$$
\end{lemma}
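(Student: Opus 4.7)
This is a classical stability statement for the one-dimensional Dirichlet Schr\"odinger operator on $(0,1)$, and I would follow a P\"oschel--Trubowitz-type strategy: first establish a uniform-in-$j$ pointwise bound on normalized eigenfunctions, then use a Hellmann--Feynman formula along a linear path of potentials for the eigenvalue estimate, and finally treat the eigenfunction difference via a reduced-resolvent argument. The main work is concentrated in the first step; the other two are relatively routine consequences.

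\textbf{Step 1 (A priori eigenfunction bound).} For any $U\in L^2(0,1)$, I would write the $L^2$-normalized eigenfunction $\chi_j[U]$ as the solution of the Volterra integral equation obtained by integrating $-\tfrac12 \chi_j'' = (\lambda_j[U]-U)\chi_j$ twice from $z=0$, with $\chi_j(0)=0$ and the initial slope recovered at the end from the normalization. Iterating the Volterra kernel and using Cauchy--Schwarz against the $L^2$ part (while handling the oscillatory $\lambda_j$-part by comparison with the free eigenfunctions $\sqrt{2}\sin(\pi j z)$) produces the bound
$$\|\chi_j[U]\|_{L^\infty(0,1)} \le C\, e^{C\|U\|_{L^2(0,1)}},$$
uniformly in $j$. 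The uniformity in $j$ is natural because the free eigenfunctions are uniformly bounded, and the $L^2$ potential only adds a Gronwall-type exponential factor.

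\textbf{Step 2 (Eigenvalue stability).} Set $U_t := V+t(U-V)$ for $t\in[0,1]$. Since the Dirichlet eigenvalues $\lambda_j[U_t]$ are simple and depend smoothly on $t$, the Hellmann--Feynman formula yields $\tfrac{d}{dt}\lambda_j[U_t] = \int_0^1 (U-V)\,|\chi_j[U_t]|^2\,dz$. Integrating in $t$, pulling out $\|U-V\|_{L^1(0,1)}$ against $\|\chi_j[U_t]\|_{L^\infty}^2$, and applying Step 1 with $\|U_t\|_{L^2}\le \|U\|_{L^2}+\|V\|_{L^2}$ gives the desired eigenvalue estimate.

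\textbf{Step 3 (Eigenfunction stability).} Fix signs so that $\langle\chi_j[U],\chi_j[V]\rangle_{L^2}\ge 0$ and set $\phi := \chi_j[U]-\chi_j[V]$, which solves
$$\bigl(H[U]-\lambda_j[U]\bigr)\phi = (V-U)\chi_j[V] + \bigl(\lambda_j[U]-\lambda_j[V]\bigr)\chi_j[V],$$
with Dirichlet boundary data. Decomposing $\phi = \alpha\,\chi_j[U] + \phi^\perp$, the coefficient $\alpha$ is controlled by the two $L^2$-normalizations together with Step 2, while $\phi^\perp$ is recovered by the reduced resolvent of $H[U]$ at $\lambda_j[U]$. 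Its operator norm on $\{\chi_j[U]\}^\perp$ is bounded by the inverse spectral gap, which by applying Step 2 to $j\pm 1$ is comparable to the free gap $\pi^2(2j+1)$ up to the exponential factor $e^{C\|U\|_{L^2}}$. Combining both contributions with Step 1 and using the one-dimensional Green's function representation to upgrade from $L^2$ to $L^\infty$ yields the claimed bound on $\|\chi_j[U]-\chi_j[V]\|_{L^\infty(0,1)}$.

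\textbf{Main obstacle.} The delicate point is Step 1: the constant must depend on $U$ only through its $L^2$-norm and must be uniform in $j$. A naive Gronwall argument with $\|U\|_{L^\infty}$ is easy but insufficient, and refining to $L^2$ requires splitting off the oscillatory leading part so that only the potential enters the Cauchy--Schwarz estimate. Once Step 1 is in place, Step 2 is immediate; Step 3 is also manageable, although some care is needed in tracking the sign convention for the eigenfunctions and in verifying that the spectral gap survives the perturbation with the right quantitative control.
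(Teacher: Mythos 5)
The paper does not prove this lemma at all: it is quoted as a known fact with references to \cite[Lemma 2.4]{BM} and \cite[Chapter 2]{PT}, so there is no in-paper argument to compare against. Judged on its own terms, your Steps 1 and 2 are sound and are essentially the route taken in those references: the Volterra iteration for the initial-value solutions gives $\|\chi_j[U]\|_{L^\infty}\le Ce^{C\|U\|_{L^1}}$ uniformly in $j$, and Hellmann--Feynman along $U_t=V+t(U-V)$ (legitimate since Dirichlet eigenvalues in 1D are simple and the perturbation is form-bounded) then yields the eigenvalue estimate with $\|U-V\|_{L^1}$ on the right.

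Step 3, however, has a genuine gap. Your control of the reduced resolvent rests on the claim that the spectral gap $\lambda_{j\pm1}[U]-\lambda_j[U]$ is comparable to the free gap up to $e^{C\|U\|_{L^2}}$, justified by applying Step 2 to $j\pm1$. That perturbative comparison gives $\lambda_{j+1}[U]-\lambda_j[U]\ge \tfrac{\pi^2(2j+1)}{2}-Ce^{C\|U\|_{L^2}}\|U\|_{L^2}$, which is vacuous for all $j\lesssim e^{C\|U\|_{L^2}}\|U\|_{L^2}$; and for such low modes the gap genuinely can be exponentially small in $\|U\|_{L^2}$ (barrier/double-well potentials), so no bound comparable to the free gap is available. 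A correct quantitative lower bound on the gap, uniform over $\{U:\|U\|_{L^2}\le R\}$, is true but requires a separate argument and is not supplied. The proofs in P\"oschel--Trubowitz and in \cite[Lemma 2.4]{BM} avoid the resolvent entirely: they write $\chi_j[U]=y_2(\cdot,\lambda_j[U],U)/\|y_2(\cdot,\lambda_j[U],U)\|_{L^2}$, where $y_2$ is the fundamental solution of the initial-value problem, which is jointly Lipschitz in $(\lambda,U)$ with constants of the form $e^{C\|U\|_{L^1}}$ by the same Volterra series as in your Step 1; combining this with the eigenvalue estimate of Step 2 and a lower bound on the normalizing factor gives the $L^\infty$ difference bound with no spectral-gap input. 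You should either adopt that representation or supply a genuine low-mode gap estimate; as written, Step 3 does not close.
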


We recall from Lemma \ref{basic estimates} that 
$$\sup_{n\geq1}\|\rho_{(\mathbf{f}^{(n)}, \bchi^{(n)})}\|_{L^\frac53 (\Omega)}<\infty.$$
On the other hand, $U_{\rho_{(\mathbf{f}^{(n)},\bchi^{(n)})}}$ satisfies the Neumann boundary condition on $\omega\times \{0,1\}$, so by the even reflection, we may assume that 
$U_{\rho_{(\mathbf{f}^{(n)},\bchi^{(n)})}}\in H^1(\omega\times(-\frac12,\frac32))$ satisfies 
$$\left\{\begin{aligned} 
-\Delta U_{\rho_{(\mathbf{f}^{(n)},\bchi^{(n)})}}&=\rho_{(\mathbf{f}^{(n)},\bchi^{(n)})}&&\textup{ in } \omega\times(-\tfrac12,\tfrac32), \\
 U_{\rho_{(\mathbf{f}^{(n)},\bchi^{(n)})}}&=0 &&\textup{ on } \partial \omega \times (-\tfrac12,\tfrac32).
\end{aligned}\right.$$
Thus, applying elliptic estimates (see \cite[Theorem 9.13]{GT}) and the Sobolev embedding, we obtain
$$\sup_{n\geq 1}\|U_{\rho_{(\mathbf{f}^{(n)},\bchi^{(n)})}} \|_{L^\infty (\Omega)}<\infty.$$ 

By Lemma \ref{eigen estimates} with $U^{(n)}=U_{\rho_{(\mathbf{f}^{(n)},\bchi^{(n)})}}+V_{\textup{ext}}$ and $\lambda_j[0]=(\pi j)^2$ (see \eqref{eigenvalue: definition} and \eqref{eigenfunction: definition} for the notations), we have
$$\big|\lambda_j[U^{(n)}(y,\cdot)]-(\pi j)^2\big| \le Ce^{C\|U^{(n)}\|_{L^2(0,1)}} \|U^{(n)}\|_{L^1(0,1)}\lesssim 1.$$
Then, by the Sobolev embedding and elliptic estimates (see \cite[Theorem 9.13]{GT}), we obtain that 
$$\|\chi_j^{(n)}(y,\cdot)\|_{L^\infty(0,1)}^2\lesssim\|\chi_j^{(n)}(y,\cdot)\|_{H_0^1 (0,1)}^2\leq 1+\lambda_j[U^{(n)}(y,\cdot)] \lesssim j^2$$
for all $y\in\omega$. Hence, it follows that 
$$\|U_{\rho_{(\mathbf{f}^{(n)},\bchi^{(n)})}} \|_{C^1(\overline{\Omega})}\lesssim \|\rho_{(\mathbf{f}^{(n)},\bchi^{(n)})} \|_{L^\infty(\overline{\Omega})}\leq\sum_{j=1}^J\|\rho_{f_j^{(n)}}\|_{L^\infty(\omega)}\|\chi_j^{(n)}\|_{L^\infty(\Omega)}^2 \le J^3.$$
since $f_j^{(n)}\equiv0$ for all $j\geq J+1$ and $\rho_{f_j^{(n)}}(y)$'s are bounded uniformly in $y\in\omega$. Consequently, by Lemma \ref{eigen estimates} again, we prove that 
$$|{\chi}_j^{(n)}(y,z)- {\chi}_j^{(n)}(y',z)|\lesssim \|U^{(n)}(y,\cdot)-U^{(n)}(y',\cdot)\|_{L^1(0,1)}\lesssim_J|y-y'|,$$
where we used the regularity assumption  $V_\textup{ext}\in C^1(\overline{\Omega})$, while by the fundamental theorem of calculus, 
$$\begin{aligned}
|{\chi}_j^{(n)}(y,z)- {\chi}_j^{(n)}(y,z')|&=\left|\int_{z'}^z  (\partial_s\chi_j^{(n)}) (y,s)ds\right|\le \|\partial_z \chi_j^{(n)}(y,\cdot)\|_{L^2(0,1)}|z-z'|^\frac12\\
&\lesssim j|z-z'|^\frac12.
\end{aligned}$$
Combining the two inequalities, we conclude that $|{\chi}_j^{(n)}(x)- {\chi}_j^{(n)}(x')|\lesssim j|x-x'|^\frac12$.

\subsubsection{Existence of a minimizer}
For the kinetic distribution part $\{\mathbf{f}^{(n)}\}_{n=1}^\infty\subset \mathcal{A}_{\textup{c.m.}}^\downarrow$, taking the weak subsequential limit as in the proof of Lemma \ref{reduced problem}, we have $f_j^{(n)}\rightharpoonup f_j^*$ in $L^r(\omega\times \R^2)$ for any $1<r<\infty$ so that $\mathbf{f}^*\in \mathcal{A}_{\textup{c.m.}}^\downarrow$.
 For the quantum part $\{\bchi^{(n)}\}_{n=1}^\infty$, by the equicontinuity \eqref{eq: equicontinuity} and the Arzel\`{a}-Ascoli theorem, we deduce that up to a subsequence, $\|\chi_j^{(n)}-\chi_j^*\|_{C(\Omega)}\to0$ as well $\chi_j^{(n)}(y,\cdot)\rightharpoonup\chi_j^*(y,\cdot)$ in $H^1(0,1)$ as $n\to\infty$ as for each $y\in\omega$.

We claim that $(\mathbf{f}^*,\bchi^*)$ is a minimizer for the problem $\mathcal{F}_\textup{min}(M)$. Indeed, we have 
$$\delta_{jk}=\langle\chi_j^{(n)}, \chi_k^{(n)}\rangle_{L^2(0,1)}\to\langle\chi_j^*, \chi_k^*\rangle_{L^2(0,1)}.$$
Instead of \eqref{quantum energy lower bound}, by a diagonal argument, we obtain 
$$\begin{aligned}
&\liminf_{n\to \infty}\sum_{j=1}^\infty\iint_{\omega\times\mathbb{R}^2} \langle (-\tfrac{\partial_z^2}{2}+V_{\textup{ext}}) \chi_j^{(n)}, \chi_j^{(n)}\rangle_{L^2 (0,1)}{f}_j^{(n)}dydv\\
&\ge \sum_{j=1}^\infty\iint_{\omega\times\mathbb{R}^2}\langle (-\tfrac{\partial_z^2}{2}+V_{\textup{ext}}) \chi_j^*, \chi_j^*\rangle_{L^2 (0,1)}{f}_j^* dydv.
\end{aligned}$$
In addition, we have 
$$\begin{aligned}
\rho_{(\mathbf{f}^{(n)}, \bchi^{(n)})}-\rho_{(\mathbf{f}^*, \bchi^*)}&=\sum_{j=1}^J\rho_{f_j^{(n)}}\big(|\chi_j^{(n)}|^2-|\chi_j^*|^2\big)+\big(\rho_{(\mathbf{f}^{(n)}, \bchi^*)}-\rho_{(\mathbf{f}^*, \bchi^*)}\big)\rightharpoonup 0\textup{ in }L^{\frac{6}{5}}(\Omega),
\end{aligned}$$
because $\|\chi_j^{(n)}-\chi_j^*\|_{C(\Omega)}\to0$ and $\rho_{(\mathbf{f}^{(n)}, \bchi^*)}\rightharpoonup\rho_{(\mathbf{f}^*, \bchi^*)}$ in $L^\frac{6}{5}(\Omega)$ (by  \eqref{l3weak}). Consequently, it follows that 
$$\liminf_{n\to \infty}\int_\Omega |\nabla  U_{\rho_{(\mathbf{f}^{(n)},\bchi^{(n)})}}  |^2dx\ge \int_\Omega |\nabla  U_{\rho_{(\mathbf{f}^*,\bchi^*)}}|^2dx.$$
Hence, repeating the proof of Lemma \ref{reduced problem}, one can show that  $(\mathbf{f}^*,\bchi^*)$ is admissible, $M= \mathcal{M}(\mathbf{f}^{(n)})\to\mathcal{M}(\mathbf{f}^*)$ and $\mathcal{F}_{ \textup{min}}(M)=\underset{n\to \infty}{\liminf} \mathcal{F}(\mathbf{f}^{(n)}, \bchi^{(n)})\ge  \mathcal{F}(\mathbf{f}^*, \bchi^*)$. Therefore, $(\mathbf{f}^*,\bchi^*)\in \mathcal{A}_{\textup{c.m.}}^\downarrow\times\mathcal{A}_{\textup{q.m.}}$ is a free energy minimizer.

\subsubsection{Self-consistent equation \eqref{self consi} and \eqref{Schro Poisson} and uniqueness}
Let $(\mathbf{f}^*,\bchi^*)\in\mathcal{A}_{\textup{c.m.}}^\downarrow\times\mathcal{A}_{\textup{q.m.}}$ be a free energy minimizer for $\mathcal{F}_\textup{min}(M)$. To derive the eigenvalue equation \eqref{Schro Poisson}, we note that $\bchi^*$ is the unique minimizer for the partial variational problem for fixed $\mathbf{f}^*$, i.e., $\mathcal{F}(\mathbf{f}^*,\bchi^*)=\mathcal{F}_{\mathbf{f}^*;\textup{min}}$. Hence, by Proposition \ref{proposition: reduced problem}, $\bchi^*$ satisfies the Schr\"odinger equation \eqref{Schro Poisson}. On the other hand, $\mathbf{f}^*$ is a minimizer for another partial variational problem for fixed $\bchi^*$, i.e., $\mathcal{F}(\mathbf{f}^*,\bchi^*) =\mathcal{F}_{\bchi^*;\textup{min}}(M)$. Thus, by Lemma \ref{euler lagrange equation for f}, we deduce the self-consistent equation \eqref{self consi}.

For uniqueness, we assume that there is another minimizer $(\tilde{\mathbf{f}}^*,\tilde{\bchi}^*)$ with $U_{\rho_{(\mathbf{f}^*,\bchi^*)}}\not\equiv  U_{\rho_{(\tilde{\mathbf{f}}^*,\tilde{\bchi}^*)}}$. Here, we may assume that $(\tilde{\mathbf{f}}^*,\tilde{\bchi}^*)\in \mathcal{A}_{\textup{c.m.}}^\downarrow\times\mathcal{A}_{\textup{q.m.}}$ because the free energy and the total density $\rho_{(\mathbf{f} , \bchi )}=\sum_{j=1}^\infty \rho_{f_j}(y)|\chi_j(x)|^2$ are invariant under the rearrangement in $j$-summed quantities.
Then, by Lemma \ref{lemma: energy coercivity} and the estimate in \eqref{non-negative second term}, we have
$$\begin{aligned}
0=\mathcal{F}(\tilde{\mathbf{f}}^*,\tilde{\bchi}^*)-\mathcal{F}(\mathbf{f}^*,\bchi^*)&\geq\frac{1}{2}\|\nabla (U_{\rho_{(\tilde{\mathbf{f}}^*,\tilde{\bchi}^*)}}-U_{\rho_{(\mathbf{f}^*,\bchi^*)}})\|_{L^2(\Omega)}^2\\
&\quad +\sum_{j=1}^\infty \iint_{\omega\times\mathbb{R}^2}\left(\frac{|v|^2}{2}+\lambda_j^*+T\beta'\big(f_j^* \big)\right)(\tilde{f}^*_j-f^*_j) dy dv\\
&\geq\frac{1}{2}\|\nabla (U_{\rho_{(\tilde{\mathbf{f}}^*,\tilde{\bchi}^*)}}-U_{\rho_{(\mathbf{f}^*,\bchi^*)}})\|_{L^2(\Omega)}^2,
\end{aligned}$$
which deduces a contradiction.

\subsubsection{Structure}
It is known from the spectral theory \cite{BM, BM Diffusiv, PT}  that the linear operator $-\frac{\partial_z^2}{2}+ (U_{\rho_{(\mathbf{f}^*,\bchi^*)}}+V_{\textup{ext}})(y,\cdot)$ has only simple eigenvalues so that $\lambda_j^*(y)$ is strictly increasing in $j\geq 1$. Moreover, $\lambda_j^*(y)$ has a lower bound $\geq\frac{(\pi j)^2}{3}$ (following the proof of \eqref{improved summability proof}). Therefore, for every $(y,v)\in\omega\times\mathbb{R}^2$, $f_j(y,v)$ is strictly decreasing in $j$ and $f_j^*(y,v)\equiv0$ for $j\geq\frac{\sqrt{3\mu}}{\pi}$.

\subsection{Proof of Theorem \ref{stability}}

Note that $(\mathbf{f}^*,\bchi^*)$ is a weak solution to the time-dependent Vlasov-Schr\"odinger-Poisson system, because $\mathbf{f}^*$ is a function of the microscopic energy $\frac{|v|^2}{2}-\lambda_j(y)$ and $\bchi^* =\bchi [U_{(\mathbf{f}^*,\bchi^*)}+V_{\textup{ext}}]$. For stability, we observe from the assumptions on the initial data and the conservation laws,  and the invariant property under the rearrangement that 
$$\begin{aligned}
\left(1+\mu\right)\delta&\geq\mathcal{F}(\mathbf{f}_0,\bchi_0)-\mathcal{F}(\mathbf{f}^*,\bchi^*)-\mu\big(\mathcal{M}(\mathbf{f}_0)-M\big)\\
&=\mathcal{F}(\mathbf{f}(t),\bchi(t))-\mathcal{F}(\mathbf{f}^*,\bchi^*)-\mu\big(\mathcal{M}(\mathbf{f}(t))-\mathcal{M}(\mathbf{f}^*)\big)\\
&=\mathcal{F}(\mathbf{f}^{\sigma_\downarrow}(t),\bchi^{\sigma_\downarrow}(t))-\mathcal{F}(\mathbf{f}^*,\bchi^*)-\mu\big(\mathcal{M}(\mathbf{f}^{\sigma_\downarrow}(t))-\mathcal{M}(\mathbf{f}^*)\big),
\end{aligned}$$
where $(\mathbf{f}^{\sigma_\downarrow}(t),\bchi^{\sigma_\downarrow}(t))\in\mathcal{A}_{\textup{c.m.}}^\downarrow\times\mathcal{A}_{\textup{q.m.}}$ (see Lemma \ref{rearrange distrib} for the definition of $\sigma_\downarrow$).
Hence, it follows from Lemma \ref{lemma: energy coercivity} and the estimate in \eqref{non-negative second term} that 
\begin{align*}
\left(1+\mu\right)\delta&\geq\frac{1}{2}\|\nabla (U_{\rho_{(\mathbf{f}(t),\bchi(t))}}-U_{\rho_{(\mathbf{f}^*,\bchi^*)}})\|_{L^2(\Omega)}^2 \\
&\quad +\sum_{j=1}^\infty \iint_{\omega\times\mathbb{R}^2}\left(\frac{|v|^2}{2}+\lambda_j^*+T\beta'\big(f_j^* \big)-\mu\right)(f_j^{\sigma_\downarrow}(t)-f_j^*) dy dv\\
&\geq\frac{1}{2}\|\nabla (U_{(\mathbf{f}(t),\bchi(t))}-U_{(\mathbf{f}^*,\bchi^*)})\|_{L^2(\Omega)}^2.
\end{align*}
Therefore, taking $\delta=\frac{\epsilon}{2(1+\mu)}$, we prove the theorem.

\end{document}